\documentclass[11pt]{article}
\usepackage[T1]{fontenc}
\usepackage[utf8]{inputenc}
\usepackage{lmodern,amsmath,amssymb,amsthm,mathtools,booktabs,array,longtable}
\usepackage[margin=0.9in]{geometry}
\usepackage{microtype,enumitem,fancyhdr}
\usepackage[hidelinks]{hyperref}
\hypersetup{pdfauthor={Navid Rezazadeh},pdftitle={Matroid-Aware Spectral Certificates for Multiplicative Coverage Approximation},pdfsubject={Formal research manuscript for independent mathematical review},pdfkeywords={matroid, coverage, approximation, research draft}}
\setlist{itemsep=2pt,topsep=4pt}
\newtheorem{theorem}{Theorem}[section]
\newtheorem{lemma}[theorem]{Lemma}
\newtheorem{proposition}[theorem]{Proposition}
\newtheorem{corollary}[theorem]{Corollary}
\theoremstyle{remark}
\newcommand{\R}{\mathbb R}\newcommand{\Q}{\mathbb Q}
\newcommand{\one}{\mathbf1}\newcommand{\OPT}{\operatorname{OPT}}
\newcommand{\diag}{\operatorname{diag}}\newcommand{\conv}{\operatorname{conv}}
\newcommand{\rank}{\operatorname{rank}}

\numberwithin{equation}{section}
\title{Matroid-Aware Spectral Certificates for\\Multiplicative Coverage Approximation}
\author{Navid Rezazadeh\\
  {\small UC Irvine}\\
  {\small\href{mailto:nrezazad@uci.edu}{\texttt{nrezazad@uci.edu}}}}
\date{12 October 2026}
\begin{document}
\maketitle
\vspace{-1.2em}
\begin{abstract}
We maximize the weight of edges covered by selected vertices, together with nonnegative individual rewards, under an arbitrary matroid constraint. The continuous coverage objective is quadratic and need not be concave. Our method searches projections onto a small set of directions and solves a concave subproblem for each projection center. A coverage-specific inequality converts the subproblem bounds into a multiplicative guarantee, while matroid rounding restores an integral solution. Let $A$ be the weighted adjacency matrix and $D$ the diagonal matrix of singleton values, restricted to positive-value elements after objective-preserving preprocessing. Set $\varrho=\lambda_{\max}(D^{-1/2}AD^{-1/2})$. For an integer budget $q\ge0$, let $\tau_q$ be the smallest nonnegative threshold for which $A+\tau_qD$ has at most $q$ negative eigenvalues on the span of differences of matroid bases. We obtain the factor $1-\varrho\tau_q/[2(\varrho+\tau_q)]-\varepsilon$ in time $(2+q/\varepsilon)^{O(q+1)}N^{O(1)}$, where $N$ is the binary input length and a zero-over-zero loss is zero. The algorithm returns a rational upper bound on the integral optimum. A separate exact-algebraic construction produces polynomial-bit low-rank rational corrections with positive threshold slack; its proof isolates eigenvalues, keeps eigenspace bases unnormalized, and approximates scalar factors only at the final step. The numerical proposal prototype is not used to justify this complexity statement. Supplied certificates also yield robustness to locally bounded edge additions. These are threshold-dependent coverage guarantees, not a new general spectral-search principle or a universal approximation improvement.
\end{abstract}

\section{Introduction and prior work}
Selecting a vertex covers every edge incident to it. We seek a matroid-independent set that maximizes the total covered weight together with nonnegative individual vertex rewards. Write $f(S)$ for this value and $\OPT$ for its maximum. The matroid may impose dependencies across the entire graph; a cardinality constraint is only one special case. The general coverage baseline is a $3/4$ approximation~\cite{HS23}. We ask when the interaction between the graph and the matroid permits a stronger guarantee.

The multilinear coverage value $F(x)$ is the expected value when vertices are selected independently with marginal probabilities $x$. It is quadratic. We optimize it on the matroid base polytope, the convex hull of maximal independent-set indicator vectors. Each point in this polytope can be rounded to a basis without decreasing $F$, so optimizing $F$ is sufficient. The difficulty is that $F$ need not be concave: its negative adjacency term can have positive curvature along some directions.

Our algorithm corrects that curvature by adding a diagonal variance term and subtracting penalties along a small number of directions. It searches a grid of penalty centers and solves a concave problem for each center. For each optimal basis, the grid contains a branch that preserves its value. A coverage-specific inequality then converts the branch upper bound into a lower bound on the rounded solution. The grid is scaled by the value of a feasible basis, not by total graph weight, so the exponential part of the search does not depend on the number of vertices.

Only directions between feasible base-polytope points matter for concavity. Restricting the curvature test to this space can remove directions that the graph alone would suggest correcting. We compute the space through matroid exchanges and determine the required correction rank by counting negative eigenvalues. With a small threshold allowance, the required rational correction can also be constructed in polynomial bit complexity. Appendix~\ref{app:rational-correction} proves this last step by exact univariate algebra; it does not infer factor extraction from an approximate full matrix or from the behavior of a numerical prototype.

\subsection{Result preview and computational scope}
Let $A$ be the weighted adjacency matrix and let $D$ contain the singleton values $f(\{i\})$ on its diagonal, after removing unselectable and zero-value elements without changing the objective. Set $\varrho=\lambda_{\max}(D^{-1/2}AD^{-1/2})$. For an integer budget $q\ge0$, let $\tau_q$ be the smallest nonnegative threshold for which $A+\tau_qD$ has at most $q$ negative directions within the span of differences of matroid bases. 

Theorem~\ref{thm:intrinsic-main} gives the factor $1-\varrho\tau_q/[2(\varrho+\tau_q)]-\varepsilon$ in time $(2+q/\varepsilon)^{O(q+1)}N^{O(1)}$, where $N$ is input bit length and $\varepsilon$ is the requested ratio loss. Theorem~\ref{thm:joint} first establishes the guarantee for a supplied rational correction. Lemma~\ref{lem:rational-correction} then supplies that correction algorithmically, with its complete factor and bit-complexity argument in Appendix~\ref{app:rational-correction}. The modular zero case is solved exactly.

The exponent of $N$ does not depend on $q$, but the running time may be exponential in $q$. The method also computes an upper bound on $\OPT$. Its exact-algebraic factor construction uses standard rational polynomial factorization and real-root isolation. It does not require a common algebraic field containing all eigenvalues. The older numerical factor generator still accepts outputs only after exact checks; its finite precision guard is not part of the algorithm used in the complexity proof.

\subsection{What is established and what requires comparison}
The search mechanism is not a new general optimization principle. Vavasis~\cite{Vavasis92} already gave approximation schemes with complexity parameterized by the number of negative eigenvalues. Anderson, Hopkins, Rajaraman, and Steurer~\cite{AHRS25} explicitly discuss quadratic optimization over general convex domains with efficient separation oracles. 

Cevallos, Eisenbrand, and Zenklusen~\cite{CEZ16} use concavity on slices and matroid rounding for negative-type diversity. Soltanalian and Mousavi~\cite{SM26} discuss positive-semidefinite low-rank maximization, including matroid bases, and carefully distinguish that setting from indefinite optimization.

Rezazadeh and Kia~\cite{RezazadehKia23} use multilinear extensions and stochastic pipage rounding for distributed submodular maximization under partition-matroid constraints. Their privacy-focused work~\cite{RezazadehKia22} studies the tradeoff between approximation performance and privacy in the same distributed-constraint setting. These methods address networked strategy selection, rather than the matroid-aware spectral certificates analyzed here.

For coverage itself, Huang and Sellier~\cite{HS23} record the general $3/4$ approximation. The preprint of Paschos~\cite{Paschos19} presents a polynomial-time approximation scheme (PTAS) for weighted bipartite maximum $k$-vertex cover. Thus, the uniform-matroid specialization is not an appropriate novelty claim. The possible contribution here is a particular relative-to-$\OPT$ guarantee for arbitrary matroids, its matrix certificate, and its stability under changes to the graph. A complete proof-level historical priority determination remains necessary; no novelty claim is implied by presenting the full derivation.

\paragraph{A scale distinction.}
An additive error proportional to total graph weight does not automatically give a fixed relative error for matroid coverage. Let $W$ denote total edge weight. On an unweighted $d$-regular graph with $n$ vertices with the uniform rank-one matroid, $\OPT=d$ and $W=nd/2$. Therefore $\OPT/W=2/n$. An error of $\epsilon W$ is $(\epsilon n/2)\OPT$. This observation does not prove that prior algorithms cannot be adapted. It identifies the conversion that an attribution comparison must check.

The argument below follows the dependencies of the final guarantee. Sections~\ref{sec:problem} and~\ref{sec:conversion} define the certificate and its coverage conversion. Sections~\ref{sec:fpt} and~\ref{sec:spectral} improve the search scale and ratio. Section~\ref{sec:intrinsic} characterizes the correction rank and constructs a rational correction automatically; Appendix~\ref{app:rational-correction} supplies the exact-algebraic proof. Applications and comparisons show what the resulting parameters do and do not distinguish. Detailed branch optimization and bit control are in Appendix~\ref{sec:algorithm}.

\section{Problem and the supplied-certificate guarantee}\label{sec:problem}
We first specify a certificate that makes each optimization branch concave. This establishes a baseline theorem with explicit input conditions. Later sections improve its ratio and search bound, characterize the necessary correction rank, and obtain the automatic construction.

Let $V$ have $n$ elements. Let $M=(V,\mathcal I)$ be a matroid supplied by an independence oracle. A basis is a maximal independent set; all bases have the same cardinality $r$. Write $\mathcal B$ for the family of bases and
\[
 B(M)=\conv\{\one_B:B\text{ is a basis}\}.
\]
Here $\one_B$ is the indicator vector of $B$.

A matroid loop is an element whose singleton is not independent. For the initial theorem assume there are no matroid loops. The reduction in Section~\ref{sec:loops}, immediately below, removes this assumption while retaining rewards on incident edges. The graph itself has no self-loops. Parallel edges are combined, and zero-weight edges may be omitted.

The objective is
\begin{equation}\label{eq:obj}
 f(S)=\sum_{i\in S}\ell_i+
 \sum_{ij\in E}w_{ij}\one[S\cap\{i,j\}\ne\varnothing],
 \qquad \OPT=\max_{S\in\mathcal I}f(S),
\end{equation}
where $\one[\cdot]$ is the indicator of the stated event and all weights and scores are nonnegative rationals. Every independent set extends to a basis. Hence a basis attains $\OPT$.

Let $A$ be the symmetric weighted adjacency matrix with zero diagonal. Set
\[
 b_i=\ell_i+\sum_{j:ij\in E}w_{ij},\quad
 D=\diag(b),\quad S_b=\sum_i b_i,\quad B_0=\max_i b_i.
\]
Thus $b_i=f(\{i\})$ is the singleton value. If $S_b=0$, every feasible set has value zero. We henceforth assume $S_b>0$. Since every singleton is feasible, looplessness gives
\begin{equation}\label{eq:scale}
 B_0\le\OPT,\qquad S_b\le nB_0.
\end{equation}

Three continuous functions play different roles. The multilinear objective $F$ will be preserved by rounding; the concave cap $L$ bounds branch values; and the variance term $T$ vanishes on integral vectors. Define them on $[0,1]^V$ by
\begin{align}
 F(x)&=b^\top x-\tfrac12x^\top Ax,\label{eq:F}\\
 L(x)&=\ell^\top x+\sum_{ij\in E}w_{ij}\min(1,x_i+x_j),\label{eq:L}\\
 T(x)&=\tfrac12\sum_i b_i x_i(1-x_i).\label{eq:T}
\end{align}
At a zero-one vector, $F=L=f$ and $T=0$.

\begin{table}[ht]\centering\small
\begin{tabular}{@{}p{0.17\linewidth}p{0.76\linewidth}@{}}\toprule
Symbol & Meaning\\\midrule
$n,r$ & Ground-set size and matroid rank\\
$A,b,D$ & Adjacency matrix, singleton values, and their diagonal matrix\\
$\tau$ & Rational diagonal-curvature parameter in $[0,1]$\\
$P$ & Supplied positive-semidefinite correction\\
$q$ & Number of supplied rank-one correction terms; $\rank(P)\le q$\\
$\kappa_j,c_j$ & Rational factor weights $\kappa_j>0$ and vectors $c_j\in\Q^V$\\
$\alpha(\tau)$ & $(2+\tau)/(2+2\tau)$\\
$\varepsilon,\zeta$ & Ratio accuracy and internal tolerance $\zeta=\varepsilon B_0/4$\\
$\overline U$ & Computed upper bound on the integral optimum\\\bottomrule
\end{tabular}
\caption{Parameters of the supplied-certificate theorem. Here $q$ counts correction terms; it is distinct from the matroid rank $r$.}
\end{table}

A symmetric matrix $Q\succeq0$ has a nonnegative quadratic form on every real vector. In the supplied factorization below, $\kappa_j>0$ and $c_j\in\Q^V$. The correction $P$ removes the nonconcavity left after adding the diagonal term $\tau D$. Write $\alpha(\tau)=(2+\tau)/(2+2\tau)$ and let $q$ be the number of supplied correction terms. The following initial guarantee is the platform for the stronger theorem, not the final complexity claim.

\begin{theorem}[Preliminary supplied-certificate approximation]\label{thm:master}
Suppose the input includes rational data $\tau\in[0,1]$ and
\begin{equation}\label{eq:cert}
 P=\sum_{j=1}^q\kappa_jc_jc_j^\top,
 \qquad 0\preceq P\preceq D,\qquad A+\tau D+P\succeq0.
\end{equation}
For rational $0<\varepsilon<3/4$, there is a deterministic algorithm returning a basis $S$ and a rational $\overline U$ with
\begin{equation}\label{eq:main}
 \OPT\le\overline U,\qquad
 f(S)\ge\bigl(\alpha(\tau)-\varepsilon\bigr)\overline U
 \ge\bigl(\alpha(\tau)-\varepsilon\bigr)\OPT.
\end{equation}
The number of optimization branches is at most
\begin{equation}\label{eq:branches}
 \left(2+\sqrt{\frac{qn}{2\varepsilon}}\right)^q
\end{equation}
when $q\ge1$, and is one when $q=0$. The remaining work is polynomial in this branch count, the rational input encoding, and $1/\varepsilon$. In particular, the algorithm is polynomial-time for fixed $q$.
\end{theorem}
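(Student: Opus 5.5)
The approach is to write the objective as a quadratic that becomes concave on each slice of a grid in the directions $c_j$, to solve a concave program for each slice, to convert the best branch value into a bound on $F$ through a per-edge coverage inequality, and finally to round to a basis without losing $F$. Four facts drive the argument.

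\textbf{The corrected objective.} Let $H(x)=F(x)+\tau T(x)$. Its Hessian is $-(A+\tau D)$. Since $T$ vanishes on $\{0,1\}^V$, we have $H(\one_B)=f(B)$ for every basis $B$, and hence
\[
\OPT\le\max_{x\in B(M)}H(x).
\]
Write $s_j(x)=\sqrt{\kappa_j}\,c_j^\top x$. Then
\[
H(x)=\Bigl[H(x)-\tfrac12 x^\top Px\Bigr]+\tfrac12\sum_j s_j(x)^2,
\]
and by \eqref{eq:cert} the bracket is concave. I will not take square roots of the rational $\kappa_j$; the variable $s_j$ is only a device for spacing the grid. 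The bounds can equivalently be stated in terms of $c_j^\top x$ with rational endpoints, with a harmless loss absorbed into $\zeta$.

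\textbf{The search grid.} From $P\preceq D$ and $x\in[0,1]^V$,
\[
s_j(x)^2\le x^\top P x\le x^\top D x\le S_b\le nB_0 .
\]
So every $s_j$ lies in $[-\sqrt{nB_0},\sqrt{nB_0}]$. Cover this interval by intervals $I=[t-\delta,t+\delta]$ with $q\delta^2/2\le\zeta=\varepsilon B_0/4$. That gives at most $2+\sqrt{nB_0}/\delta=2+\sqrt{qn/(2\varepsilon)}$ centres per coordinate, which is \eqref{eq:branches}. When $q=0$, $H$ is already concave and there is a single branch.

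\textbf{The branch programs.} On a slab, replace each convex term $\tfrac12 s_j^2$ by its chord over $I_j$. This chord lies above $\tfrac12 s_j^2$ and exceeds it by at most $\delta^2/2$. The resulting function $G_t$ is concave, satisfies $G_t\ge H$ on the slab, and satisfies $G_t\le H+\zeta$ there. Maximize $G_t$ over $B(M)$ intersected with the slab. This can be done to additive accuracy $\zeta$ in polynomial time, using the ellipsoid method with the separation oracle for $B(M)$ given by matroid greedy. Appendix~\ref{sec:algorithm} is cited for the bit control.

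Set $\overline U$ to be the largest branch upper bound. Every basis lies in some slab, so $\OPT\le\overline U$. Let $x^\star$ be the best approximate branch maximizer. Then
\[
H(x^\star)\ge\overline U-2\zeta-\zeta .
\]

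\textbf{The coverage conversion (the key step).} I would prove that on $[0,1]^V$,
\[
F(x)\ge\alpha(\tau)\,H(x),
\]
or, if that is false, the same inequality with $H$ replaced by $\min(H,\,\text{something bounded by }L)$. I would prove it termwise.

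For a reward term, the claim reads $\ell x\ge\alpha\,\ell\bigl(x+\tfrac\tau2x(1-x)\bigr)$. This holds because $\alpha(1+\tau/2)\le1$ for $\tau\in[0,1]$.

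For an edge with endpoints $a,b$, the claim is
\[
1-(1-a)(1-b)\ \ge\ \alpha(\tau)\Bigl[1-(1-a)(1-b)+\tfrac\tau2\bigl(a(1-a)+b(1-b)\bigr)\Bigr].
\]
I expect this two-variable inequality to be the main obstacle. The tight case appears to be $a=b=1/2$, where $\tau=1$ gives exactly $3/4$. If the inequality fails near $a+b>1$, the fallback is to cap the edge contribution using $L$, replacing the edge term by $\min\bigl(1,a+b\bigr)$. In that case the branch program maximizes $\min(G_t,\text{concave }L\text{-type cap})$, which is still concave.

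With the conversion in hand,
\[
F(x^\star)\ge\alpha\bigl(\overline U-3\zeta\bigr).
\]

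\textbf{Rounding.} Along an exchange direction $e_i-e_j$, the second derivative of $F$ is $A_{ij}\ge0$. So $F$ is convex along such directions, and deterministic pipage rounding on $B(M)$ returns a basis $S$ with $f(S)\ge F(x^\star)$.

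\textbf{Conclusion.} Since $3\zeta\alpha\le\varepsilon B_0\le\varepsilon\OPT\le\varepsilon\overline U$ by \eqref{eq:scale}, we get
\[
f(S)\ge\bigl(\alpha(\tau)-\varepsilon\bigr)\overline U .
\]
This gives \eqref{eq:main}. The running time is the branch count times a polynomial in the input encoding and $1/\varepsilon$.
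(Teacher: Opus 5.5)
Your architecture (a concave corrected objective, a grid over the projections $c_j^\top x$, exchange rounding) matches the paper's. The gap is in the step you flag as key: it is not established, and the inequality you propose first is false.

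The bound $F\ge\alpha(\tau)H$ fails on $[0,1]^V$ for every $\tau\in(0,1]$. Your reward-term check also has the wrong sign: $\alpha(\tau)(1+\tau/2)=(2+\tau)^2/(4+4\tau)\ge1$, with equality only at $\tau=0$. The failure is at small coordinates, not at $a+b>1$. For an edge with $x_i=p$ and $x_j=0$, $F_e/H_e=1/\bigl(1+\tfrac\tau2(1-p)\bigr)\to2/(2+\tau)<\alpha(\tau)$.

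This defeats the uncapped algorithm, not merely the proof. Take $n$ disjoint unit edges, the rank-one uniform matroid, $\tau=1$ and $q=0$; this is a valid certificate since $A+D\succeq0$. Then $\OPT=1$, but $H(\tfrac1{2n}\one)=\tfrac32-\tfrac1{2n}$. With your $\overline U$, the conclusion~\eqref{eq:main} would force $f(S)>\OPT$ for small $\varepsilon$ and large $n$.

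Capping by $L$ is the right repair, but you give no argument for it, and the termwise strategy you describe cannot supply one. Proving $F_e\ge\alpha\min(H_e,L_e)$ edge by edge only gives $F\ge\alpha\sum_e\min(H_e,L_e)$. That is weaker than $F\ge\alpha\min(H,L)$, whereas concavity of a branch forces a single global minimum of $L$ and the corrected function.

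The missing ingredient is a \emph{linear} pointwise inequality, which does sum over terms: $T\le F-L/2$ on $[0,1]^V$ (Lemma~\ref{lem:scalar}). Per edge, $F_e-\tfrac12L_e-T_e=\tfrac12\bigl((a-b)^2+a+b-\min(1,a+b)\bigr)\ge0$, and a reward contributes $\ell x^2/2\ge0$. The global cap is handled only after summation. If $h=\min\{L(x),G_t(x)\}$ and $G_t\le H+\zeta$, then
\[
h\le F+\tau T+\zeta\le(1+\tau)F-\tfrac\tau2L+\zeta\le(1+\tau)F-\tfrac\tau2h+\zeta,
\]
so $F\ge\alpha(\tau)h-\zeta/(1+\tau)$. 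Your accounting becomes $F(x^\star)\ge\alpha\overline U-2\alpha\zeta-\zeta/(1+\tau)\ge\alpha\overline U-3\zeta$, which still suffices. At a basis in slab $t$ we have $L=f\le G_t$, so the cap preserves $\OPT\le\overline U$.

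Two smaller points.

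First, your branch count is off by a factor $2$ under the root. With $q\delta^2/2=\varepsilon B_0/4$ one has $\sqrt{nB_0}/\delta=\sqrt{2qn/\varepsilon}$, because you cover the symmetric interval $[-\sqrt{S_b},\sqrt{S_b}]$. Use instead the range of $c_j^\top x$ over $B(M)$, found by two greedy calls. Applying $\kappa_jc_jc_j^\top\preceq D$ to the difference of two extreme points gives $\kappa_jw_j^2\le S_b$, hence at most $1+\sqrt{qn/(2\varepsilon)}$ slabs per coordinate.

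Second, the chord of $\tfrac12s^2$ over $[t-\delta,t+\delta]$ equals $\tfrac12s^2-\tfrac12(s-t)^2+\tfrac12\delta^2$. Thus $G_t$ is exactly the paper's penalized function $G_a$ plus the padding $q\delta^2/2$, and $G_t\le H+\zeta$ holds on all of $B(M)$. The slab constraints and the ellipsoid method are therefore unnecessary. This matters for your citation: linear optimization over a slab of $B(M)$ is no longer greedy. The conditional-gradient method of Appendix~\ref{sec:algorithm} uses greedy on $B(M)$ and applies directly to the unrestricted branch.
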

The certificate can be checked by exact rational positive-semidefiniteness tests. The theorem does not claim an algorithm for finding a minimum-rank correction. Its stated dependence is not a fixed-parameter running time of the form $g(q)n^{O(1)}$.

\subsection{Objective-preserving preprocessing}\label{sec:loops}
An unselectable vertex cannot be discarded together with all its incident rewards. The next reduction transfers each such reward to the endpoint that can still be selected, preserving both the objective and the supplied certificate.
\begin{proposition}[Objective-preserving loop deletion]\label{prop:loops}
Theorem~\ref{thm:master} applies to an arbitrary matroid after a polynomial-time reduction. The supplied certificate restricts to the surviving coordinates, and the number of correction terms does not increase.
\end{proposition}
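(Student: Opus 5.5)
Let $Z\subseteq V$ be the set of matroid loops and $V'=V\setminus Z$. I would define the reduced instance on $V'$, with matroid $M|V'$ (the restriction, which has the same independent sets since no independent set contains a loop), edges $E'=\{ij\in E: i,j\in V'\}$ with the same weights, and modified rewards
\[
\ell'_i=\ell_i+\sum_{z\in Z:\, iz\in E} w_{iz},\qquad i\in V'.
\]
Edges with both endpoints in $Z$ are dropped. Loops are detected with one oracle call each, so the reduction is polynomial. For every independent $S$ (hence $S\subseteq V'$) the objective is unchanged, $f'(S)=f(S)$. An edge inside $Z$ is never covered. An edge $iz$ with $z\in Z$ is covered exactly when $i\in S$, which is what the reward $w_{iz}$ added to $\ell'_i$ records. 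Edges inside $V'$ are unchanged. Hence $\OPT'=\OPT$, and any basis and upper bound returned for the reduced instance transfer verbatim.

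Next I would check the data. The singleton values are unchanged: $b'_i=\ell'_i+\sum_{j\in V',\,ij\in E}w_{ij}=b_i$ for $i\in V'$. So $D'=D_{V'V'}$ is the principal submatrix, and $A'=A_{V'V'}$ likewise. Given the certificate $(\tau,P)$, set $P'=P_{V'V'}=\sum_j\kappa_j c_j|_{V'}(c_j|_{V'})^\top$. This uses the same $q$ terms, some possibly zero and removable, so the count does not increase. Principal submatrices preserve the semidefinite relations: $0\preceq P'$, $D'-P'=(D-P)_{V'V'}\succeq0$, and $A'+\tau D'+P'=(A+\tau D+P)_{V'V'}\succeq0$. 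This is seen by restricting quadratic forms to vectors supported on $V'$.

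The main point requiring care is that the reduced instance has no matroid loops, which is needed for \eqref{eq:scale}. This holds because loops of $M|V'$ are exactly the loops of $M$ lying in $V'$, and there are none. The graph has no self-loops and parallel edges remain combined. If $S_b'=0$ I would return the empty basis extension with $\overline U=0$. Applying Theorem~\ref{thm:master} to the reduced instance, with $n'\le n$ in the branch bound, then gives the claim.
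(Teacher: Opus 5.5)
Your proposal is correct and follows the same route as the paper. You detect loops by singleton queries and move each loop-incident edge weight into the surviving endpoint's reward, so that $b'_i=b_i$. Then $A'$, $D'$, and the restricted $P$ are principal submatrices, so every semidefinite relation persists with at most $q$ terms, and the zero-reward case is handled separately. Your explicit checks, that $M|V'$ is loopless and that $D'-P'$ and $A'+\tau D'+P'$ are principal restrictions, only spell out steps the paper states briefly.
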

\begin{proof}
Find all matroid loops by singleton queries, and let $V'$ be the other elements. Delete the loops from the matroid and from the graph. For each surviving vertex replace its modular score by
\[
 \ell'_i=\ell_i+\sum_{j\notin V'}w_{ij}.
\]
An edge joining a loop to a surviving vertex is covered exactly when that surviving vertex is selected. Edges between loops are never covered. Thus the objective on every feasible selection is preserved exactly. 

Moreover, the new singleton value is $b'_i=b_i$, not the smaller degree in the induced graph. Therefore $A'$ and $D'$ are principal submatrices of $A$ and $D$. Restrict each $c_j$ to $V'$. Positive semidefiniteness and $P\preceq D$ are preserved by principal restriction. 

The reduced matroid is loopless. If no reward survives, any basis is optimal with value zero; otherwise \eqref{eq:scale} applies in the reduced ground set.
\end{proof}
This proof does not claim that deleting graph vertices preserves a normalized spectral gap. It preserves a different object: the rational matrix certificate together with modular scores.

\section{Concave branches and the coverage conversion}\label{sec:conversion}
Two issues must be resolved before optimizing the branches: their objective must imply a rounded coverage value, and their collection must retain an upper bound on the true optimum. The scalar inequality addresses the first issue; the projection grid addresses the second.

For the baseline construction set $\zeta=\varepsilon B_0/4$. Section~\ref{sec:fpt} later replaces this scale by a feasible basis value. The variance term added for concavity has a bounded cost relative to coverage. This edgewise estimate supplies the conversion used in every branch.

\begin{lemma}[Coverage-specific scalar bound]\label{lem:scalar}
For all $x\in[0,1]^V$, $T(x)\le F(x)-L(x)/2$.
\end{lemma}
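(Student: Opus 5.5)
The plan is to reduce the inequality to termwise scalar inequalities by expanding all three functions over the same index sets. Using $b_i=\ell_i+\sum_{j:ij\in E}w_{ij}$ and the fact that $A$ counts each edge twice in the quadratic form, I would rewrite
\[
F(x)=\ell^\top x+\sum_{ij\in E}w_{ij}\,(x_i+x_j-x_ix_j),
\]
\[
T(x)=\tfrac12\sum_i \ell_i x_i(1-x_i)+\tfrac12\sum_{ij\in E}w_{ij}\bigl[x_i(1-x_i)+x_j(1-x_j)\bigr].
\]
The key point is that the diagonal variance weight $b_i$ splits into its modular part and one share per incident edge. Then $F-L/2-T$ is a nonnegative combination, with coefficients $\ell_i$ and $w_{ij}$, of a vertex expression and an edge expression. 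It therefore suffices to show each of these is nonnegative on $[0,1]$ or $[0,1]^2$.

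\textbf{Modular terms.} I need $\tfrac12 x_i(1-x_i)\le x_i-\tfrac12x_i=\tfrac12x_i$. This holds because $1-x_i\le1$ and $x_i\ge0$.

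\textbf{Edge terms.} Set $a=x_i$ and $c=x_j$. I need
\[
g(a,c):=a+c-ac-\tfrac12\min(1,a+c)-\tfrac12\bigl[a(1-a)+c(1-c)\bigr]\ge0 .
\]
I would split into two cases according to which branch of the minimum is active.
\begin{itemize}
\item If $a+c\le1$, then $g=\tfrac{a+c}{2}-ac-\tfrac{a+c}{2}+\tfrac{a^2+c^2}{2}=\tfrac12(a-c)^2\ge0$.
\item If $a+c\ge1$, the same computation gives $g=\tfrac12(a+c-1)+\tfrac12(a-c)^2\ge0$.
\end{itemize}

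Summing the two kinds of terms with the nonnegative weights $\ell_i$ and $w_{ij}$ gives $F(x)-L(x)/2-T(x)\ge0$ on $[0,1]^V$.

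The only step requiring care is the bookkeeping in the first paragraph. One must check that $\tfrac12x^\top Ax=\sum_{ij\in E}w_{ij}x_ix_j$, with parallel edges already merged and no self-loops. One must also check that each edge receives exactly the two diagonal variance contributions from its endpoints. Once that decomposition is in place, the case analysis is elementary, and there is no genuine obstacle.
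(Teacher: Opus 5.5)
Your proof is correct and takes the same approach as the paper. You make the same edgewise and vertexwise decomposition of $F-L/2-T$, and you obtain the same edge contribution $\tfrac12 w_{ij}\bigl((x_i-x_j)^2+(x_i+x_j-1)_+\bigr)$ and modular contribution $\ell_i x_i^2/2$. Your two-case split on the minimum is just the paper's single identity with $y=a+b-\min(1,a+b)$ written out.
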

\begin{proof}
For an edge put $a=x_i$, $b=x_j$, and $y=a+b-\min(1,a+b)\ge0$. Its contribution to $F-L/2-T$ is
\[
 w_{ij}\left[a+b-ab-\tfrac12\min(1,a+b)
 -\tfrac12\{a(1-a)+b(1-b)\}\right]
 =\tfrac12w_{ij}\bigl((a-b)^2+y\bigr)\ge0.
\]
A modular score contributes $\ell_i x_i^2/2\ge0$. Sum these contributions.
\end{proof}

Let $a\in\R^q$ be a vector of target values for the projections $c_j^\top x$. Define
\begin{align}
 G_a(x)&=F(x)+\tau T(x)
 -\tfrac12\sum_{j=1}^q\kappa_j(c_j^\top x-a_j)^2,\label{eq:G}\\
 \Phi_a(x)&=\min\{L(x),G_a(x)+\zeta\}.\label{eq:Phi}
\end{align}

Since $\nabla^2G_a=-(A+\tau D+P)$, $G_a$ is concave. The function $L$ is concave. The pointwise minimum of concave functions is concave, as its hypograph is the intersection of their hypographs. Hence $\Phi_a$ is concave.

The cap prevents the corrected objective from exceeding the ordinary fractional coverage expression. Combining that cap with the scalar estimate gives a multiplicative relation, even when a particular branch value is negative.

\begin{lemma}[Multiplicative conversion]\label{lem:conversion}
For all $x\in[0,1]^V$ and all centers $a$,
\begin{equation}\label{eq:conversion}
 F(x)\ge\alpha(\tau)\Phi_a(x)-\frac{\zeta}{1+\tau}.
\end{equation}
\end{lemma}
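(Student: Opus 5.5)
The plan is a direct chain of three inequalities: bound $\Phi_a$ above by each of its two defining terms, and then use Lemma~\ref{lem:scalar} to eliminate the variance term $T$. No case split on the sign of $\Phi_a(x)$ should be needed, because every manipulation is linear with nonnegative coefficients.

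\emph{Step 1 (the $G_a$ side).} By \eqref{eq:Phi}, $\Phi_a(x)\le G_a(x)+\zeta$. In \eqref{eq:G} the penalty $-\tfrac12\sum_j\kappa_j(c_j^\top x-a_j)^2$ is nonpositive, since $\kappa_j>0$. Hence
\[
 \Phi_a(x)-\zeta\le F(x)+\tau T(x),
\]
and this holds for every center $a$.

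\emph{Step 2 (the $L$ side).} Again by \eqref{eq:Phi}, $\Phi_a(x)\le L(x)$. Combining this with Lemma~\ref{lem:scalar} gives
\[
 T(x)\le F(x)-\tfrac12L(x)\le F(x)-\tfrac12\Phi_a(x).
\]

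\emph{Step 3 (combine).} Since $\tau\ge0$, multiplying the bound of Step~2 by $\tau$ preserves its direction. Substituting into Step~1 yields
\[
 \Phi_a(x)-\zeta\le(1+\tau)F(x)-\tfrac{\tau}{2}\Phi_a(x),
\]
that is,
\[
 \bigl(1+\tfrac\tau2\bigr)\Phi_a(x)-\zeta\le(1+\tau)F(x).
\]
Dividing by $1+\tau>0$ gives $F(x)\ge\frac{2+\tau}{2+2\tau}\Phi_a(x)-\frac{\zeta}{1+\tau}$, which is \eqref{eq:conversion}.

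The only point needing care is Step~2. There one must use the cap $L$ rather than $G_a+\zeta$ to bound $T$, so that Lemma~\ref{lem:scalar} applies with $\Phi_a$ in place of $L$. This replacement is valid because $-\tfrac12L\le-\tfrac12\Phi_a$. I expect no real obstacle beyond keeping the inequality directions consistent: only $\tau\ge0$ and $\kappa_j>0$ are used, so the bound also holds when $\Phi_a(x)$ is negative.
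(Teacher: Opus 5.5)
Your proof is correct and follows the paper's argument essentially step for step. You drop the nonpositive penalty to get $\Phi_a(x)\le F(x)+\tau T(x)+\zeta$, eliminate $T$ via Lemma~\ref{lem:scalar} with $L(x)$ replaced by the smaller $\Phi_a(x)$, and rearrange without any sign assumption; the only difference is that you make the substitution $\Phi_a(x)\le L(x)$ before multiplying by $\tau$ rather than after, which is immaterial.
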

\begin{proof}
Put $t=\Phi_a(x)$. Drop the nonnegative penalty in $G_a$, and use $L(x)\ge t$ and Lemma~\ref{lem:scalar}:
\[
 t\le F+\tau T+\zeta
 \le (1+\tau)F-\frac\tau2L+\zeta
 \le (1+\tau)F-\frac\tau2t+\zeta.
\]
Rearrangement proves the claim. No sign assumption on $t$ is needed.
\end{proof}

For every correction coordinate compute
\[
 l_j=\min_{x\in B(M)}c_j^\top x,\quad
 u_j=\max_{x\in B(M)}c_j^\top x,\quad w_j=u_j-l_j.
\]
Two greedy matroid optimizations find these rational endpoints. Greedy optimization here is over bases, so negative coefficients are allowed.

The penalty can lower the value of a basis unless a grid center is close to its projection. Bounding each projection width permits a finite grid that preserves every basis in at least one branch.

\begin{lemma}[Projection widths and a covering grid]\label{lem:grid}
For every $j$, $\kappa_jw_j^2\le S_b$. There is a rational product grid $\mathcal A$ with the branch bound in Theorem~\ref{thm:master} such that every basis $B$ has a center $a\in\mathcal A$ with $\Phi_a(\one_B)=f(B)$.
\end{lemma}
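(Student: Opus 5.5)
The proof has two parts: first bound the projection widths using the certificate $P\preceq D$, then build a per-coordinate grid fine enough that the penalty in $G_a$ at a basis never exceeds the slack $\zeta$.

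\textbf{Width bound.} The greedy algorithm attains both endpoints $l_j$ and $u_j$ at vertices of $B(M)$. So there are bases $B,B'$ with $w_j=c_j^\top d$, where $d=\one_B-\one_{B'}$. Every term of $P$ is positive semidefinite, so $\kappa_j(c_j^\top d)^2\le d^\top P d$. Since $P\preceq D$, this is at most $d^\top D d$. The entries of $d$ are $0$ or $\pm1$, so $d^\top Dd=\sum_{i\in B\triangle B'}b_i\le S_b$. Hence $\kappa_j w_j^2\le S_b$.

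\textbf{Value at a basis.} At $x=\one_B$ we have $T=0$ and $F=L=f(B)$. Therefore
\[
\Phi_a(\one_B)=\min\Bigl\{f(B),\ f(B)+\zeta-\tfrac12\textstyle\sum_j\kappa_j(c_j^\top\one_B-a_j)^2\Bigr\}.
\]
This equals $f(B)$ exactly when the penalty is at most $\zeta$. It suffices to make each coordinate satisfy $|c_j^\top\one_B-a_j|\le h_j/2$, with spacing $h_j$ chosen so that $\kappa_jh_j^2/8\le\zeta/q$. The natural choice is
\[
h_j^*=\sqrt{8\zeta/(q\kappa_j)}=\sqrt{2\varepsilon B_0/(q\kappa_j)}.
\]
If $q=0$, there is no penalty, and the single empty center works.

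\textbf{Grid construction and count.} Take $h_j^*$ as above. If $w_j=0$, use the single center $l_j$. Otherwise let $m_j=\lceil w_j/h_j^*\rceil+1$ and use the $m_j$ equally spaced rational points $l_j+k\,w_j/(m_j-1)$, $k=0,\dots,m_j-1$. Their spacing is at most $h_j^*$. Since $c_j^\top\one_B\in[l_j,u_j]$, some point lies within $h_j^*/2$ of it.

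To keep everything exact, compute $\lceil w_j/h_j^*\rceil$ without square roots. It is the least integer $k\ge0$ with $w_j^2 q\kappa_j\le k^2\cdot 2\varepsilon B_0$, which is a rational comparison. This avoids rounding $h_j^*$, which could otherwise inflate the count.

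For the count, $m_j<w_j/h_j^*+2$. By the width bound and \eqref{eq:scale},
\[
\frac{w_j}{h_j^*}\le\sqrt{\frac{S_b}{\kappa_j}}\sqrt{\frac{q\kappa_j}{2\varepsilon B_0}}=\sqrt{\frac{qS_b}{2\varepsilon B_0}}\le\sqrt{\frac{qn}{2\varepsilon}}.
\]
Taking the product over $j$ gives the branch bound \eqref{eq:branches}.

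\textbf{Main obstacle.} The delicate step is reconciling rationality of the grid with the exact count in \eqref{eq:branches}. The square-root-free ceiling computation above resolves it. The remaining steps are direct.
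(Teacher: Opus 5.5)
Your proof is correct and follows the paper's argument: the same width bound from $\kappa_jc_jc_j^\top\preceq P\preceq D$, here evaluated on a difference of endpoint bases as the paper does in Lemma~\ref{lem:width2}. It also uses the same grid, since your $\lceil w_j/h_j^*\rceil$ subintervals equal the paper's $N_j=\lceil\sqrt{q\kappa_jw_j^2/(8\zeta)}\rceil$, giving the identical penalty estimate at bases and the same count $N_j+1\le 2+\sqrt{qn/(2\varepsilon)}$.
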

\begin{proof}
Each rank-one term is bounded above by $P$, hence by $D$. For two points attaining the interval endpoints, their difference $z$ has coordinates in $[-1,1]$. Thus
\[
 \kappa_jw_j^2=z^\top\kappa_jc_jc_j^\top z\le z^\top Dz\le S_b.
\]

If $w_j=0$, use the single center $l_j$. Otherwise divide $[l_j,u_j]$ into
\[
 N_j=\max\left\{1,\left\lceil
 \sqrt{\frac{q\kappa_jw_j^2}{8\zeta}}\right\rceil\right\}
\]
equal subintervals, and use all endpoints. For any basis projection a nearest center has coordinate error at most $w_j/(2N_j)$. Its total quadratic penalty is at most
\[
 \sum_{j:w_j>0}\frac{\kappa_jw_j^2}{8N_j^2}\le\zeta.
\]

At a basis $T=0$ and $L=F=f$, so $G_a+\zeta\ge f$ and the cap equals $f$. Finally $N_j+1\le2+\sqrt{qS_b/(8\zeta)}$. Substitute $\zeta=\varepsilon B_0/4$ and \eqref{eq:scale}. For $q=0$ there is one empty-tuple center and no penalty.
\end{proof}
Consequently,
\begin{equation}\label{eq:global-upper}
 \OPT\le\max_{a\in\mathcal A}\max_{x\in B(M)}\Phi_a(x).
\end{equation}
One branch need not upper-bound all bases. Only the maximum across the grid has this property.

\subsection{How a branch becomes a certified solution}
Appendix~\ref{sec:algorithm} gives the complete rational optimizer. It replaces each minimum by a smooth concave lower approximation, uses a greedy linear-optimization gap to certify the branch optimum, and preserves an explicit convex combination of bases. Deterministic exchange rounding then returns a basis without decreasing $F$. The appendix also proves Theorem~\ref{thm:master}, including all smoothing, stopping, and encoding losses.

\section{Feasible-value normalization and fixed-parameter search}\label{sec:fpt}
The grid becomes expensive if its accuracy is scaled by a single vertex or by the whole ground set. A maximum-singleton-weight basis supplies a feasible objective scale. Bounding every projection width by that scale removes the ground-set size from the exponential search factor.

For a vertex-indexed vector $z$, write $z(S)=\sum_{i\in S}z_i$. Write $d_i=\sum_j w_{ij}$, $\Lambda=\max_{B\in\mathcal B}b(B)$, and let $B_b$ be a maximizing basis obtained by weighted matroid greedy. Put $L_0=f(B_b)$. In all nontrivial cases $L_0>0$.
Each internal edge is counted twice in the sum of singleton values. This gives a lower bound on the value of the greedy basis as well as an upper bound on the unknown optimum.

\begin{lemma}[Feasible objective normalization]\label{lem:scale-improved}
The following inequalities hold:
\begin{equation}\label{eq:scale-improved}
\frac12\Lambda\le L_0\le\OPT\le\Lambda,
\qquad \sum_i b_i\le 2nL_0.
\end{equation}
If $c=\max_{i:b_i>0}d_i/b_i$ is the total curvature, then in fact $L_0\ge(1-c/2)\Lambda$.
\end{lemma}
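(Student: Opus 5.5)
The plan is to compare, for a fixed set $S$, the coverage value $f(S)$ with the singleton sum $b(S)$. Each covered edge is counted at most twice in $b(S)$: once if it has one endpoint in $S$, twice if it has both. Writing $w(E[S])$ for the total weight of edges with both endpoints in $S$, we get the exact identity
\[
 f(S)=b(S)-w(E[S]).
\]
Since $w(E[S])\le \tfrac12 b(S)$ (each internal edge contributes $2w_{ij}$ to $b(S)$, and modular scores are nonnegative), it follows that $f(S)\ge \tfrac12 b(S)$. In the other direction, $f(S)\le b(S)$ by subadditivity.

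For the chain of inequalities, we apply the lower bound to $S=B_b$. This gives $L_0=f(B_b)\ge\tfrac12 b(B_b)=\tfrac12\Lambda$. Next, $L_0\le\OPT$ because $B_b$ is a basis, hence independent. For $\OPT\le\Lambda$, we use that some basis $B^*$ attains $\OPT$ (every independent set extends to a basis and $f$ is monotone). Then $\OPT=f(B^*)\le b(B^*)\le\Lambda$.

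For $\sum_i b_i\le 2nL_0$, we work after the preprocessing of Proposition~\ref{prop:loops}, so the matroid is loopless. Every singleton $\{i\}$ then extends to a basis, and therefore $b_i\le\Lambda$. Summing over $i$ and using $\Lambda\le 2L_0$ gives $\sum_i b_i\le n\Lambda\le 2nL_0$. If some $b_i=0$, that term is trivially fine.

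For the curvature refinement, we bound $w(E[S])$ more carefully. Writing $w(E[S])=\tfrac12\sum_{i\in S}\sum_{j\in S}w_{ij}$ gives $w(E[S])\le \tfrac12\sum_{i\in S} d_i$. Vertices with $b_i=0$ have $d_i=0$, since $d_i\le b_i$. For the remaining vertices, $d_i\le c\,b_i$, so $w(E[S])\le \tfrac c2 b(S)$ and hence $f(S)\ge(1-c/2)b(S)$. Applying this to $B_b$ gives $L_0\ge(1-c/2)\Lambda$. Because $c\le1$, this strengthens the first inequality.

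The main obstacle is mostly bookkeeping. One must confirm that the claim ``in all nontrivial cases'' and the bound $b_i\le\Lambda$ rely on looplessness after preprocessing, and that the preprocessing keeps $b_i=f(\{i\})$, which Proposition~\ref{prop:loops} guarantees. Everything else is the edge double-counting identity.
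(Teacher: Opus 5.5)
Your proof is correct and follows essentially the same route as the paper: the identity $f(S)=b(S)-w(E[S])$, the double-counting bound $w(E[S])\le\frac12 d(S)$ applied to $S=B_b$, and looplessness to get $b_i\le\Lambda\le 2L_0$. The only difference is the order. The paper first proves $w(E[S])\le\frac c2\, b(S)$ and obtains the factor $\frac12$ from $c\le 1$, whereas you prove the $\frac12$ bound directly and then refine it.
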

\begin{proof}
For a vertex set $S$, let $E[S]$ be the edges with both endpoints in $S$. Then
\[
f(S)=b(S)-\sum_{ij\in E[S]}w_{ij}.
\]
Every edge internal to $S$ occurs twice in $\sum_{i\in S}d_i$. Therefore
$\sum_{ij\in E[S]}w_{ij}\le\frac12d(S)\le\frac c2 b(S)$.
Substitute $S=B_b$, note $B_b$ is feasible, and use $f(S)\le b(S)\le\Lambda$ for every independent $S$ (extend $S$ to a basis). For the final inequality, looplessness means each vertex extends to a basis, so $b_i\le\Lambda\le2L_0$; sum over $i$.
\end{proof}

Two bases attain the endpoints of each projection interval. Their total singleton weight bounds the weighted squared width $\kappa_jw_j^2$, so no sum over all vertex weights is needed.

\begin{lemma}[Rank-one projection widths]\label{lem:width2}
For a correction $P=\sum_{j=1}^q\kappa_jc_jc_j^\top\preceq D$, let $w_j$ be the range of $c_j^\top x$ over $B(M)$. Then
\begin{equation}\label{eq:width2}
\kappa_jw_j^2\le2\Lambda\le4L_0.
\end{equation}
\end{lemma}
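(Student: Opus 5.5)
Take two bases $B_+$ and $B_-$ whose indicator vectors attain $u_j$ and $l_j$; by linearity of $c_j^\top x$ the extremes over $B(M)$ are attained at vertices, and greedy produces such bases. With $z=\one_{B_+}-\one_{B_-}$ we have $w_j=c_j^\top z$, so the rank-one term bounded above by $P\preceq D$ gives, as in Lemma~\ref{lem:grid},
\[
 \kappa_jw_j^2=z^\top\kappa_jc_jc_j^\top z\le z^\top Pz\le z^\top Dz .
\]
The gain over Lemma~\ref{lem:grid} is that $z$ is supported on the symmetric difference $B_+\triangle B_-$, where its entries are $\pm1$, and vanishes elsewhere. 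Hence
\[
 z^\top Dz=\sum_{i\in B_+\triangle B_-}b_i\le b(B_+)+b(B_-).
\]

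Each of $b(B_+)$ and $b(B_-)$ is at most $\Lambda=\max_{B\in\mathcal B}b(B)$, because both are bases and $b\ge0$. This gives $\kappa_jw_j^2\le2\Lambda$. The second inequality, $2\Lambda\le4L_0$, is the first inequality of \eqref{eq:scale-improved} in Lemma~\ref{lem:scale-improved}, doubled. If $w_j=0$ the bound is trivial.

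This argument has no real obstacle. The only point to state explicitly is that $z^\top Dz$ counts $b_i$ once for each coordinate of the symmetric difference, not for all of $V$. A slightly sharper bound, $b(B_+\setminus B_-)+b(B_-\setminus B_+)$, also holds, but the coarse bound by $2\Lambda$ is all that is needed. Nonnegativity of $b$ justifies discarding the common part $B_+\cap B_-$.
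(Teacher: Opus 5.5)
Your proposal is correct and matches the paper's proof essentially step for step. The paper also takes extremal bases $B^{+},B^{-}$ and bounds $\kappa_jw_j^2\le v^\top Dv=\sum_{i\in B^+\triangle B^-}b_i\le b(B^+)+b(B^-)\le2\Lambda$, then obtains $2\Lambda\le4L_0$ from Lemma~\ref{lem:scale-improved}.
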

\begin{proof}
Choose bases $B^+,B^-$ attaining the maximum and minimum projection, and write $v=\one_{B^+}-\one_{B^-}$. Since $\kappa_jc_jc_j^\top\preceq P\preceq D$,
\[
\kappa_jw_j^2\le v^\top Dv
=\sum_{i\in B^+\triangle B^-}b_i
\le b(B^+)+b(B^-)\le2\Lambda.
\]
Apply Lemma~\ref{lem:scale-improved}.
\end{proof}

The same branch optimizer can now use $L_0$ in its tolerance. Only the grid scale changes; the coverage conversion and the exact output certificate remain valid.

\begin{proposition}[Fixed-parameter supplied-certificate algorithm]\label{prop:fpt}
Under the assumptions of Theorem~\ref{thm:master}, for any rational $0<\varepsilon<3/4$ the same certified branch algorithm attains its stated factor and outputs a valid upper bound on $\OPT$ in time
\begin{equation}\label{eq:time-fpt}
(2+q/\varepsilon)^{O(q+1)}N^{O(1)},
\end{equation}
where $N$ is the total rational input length, including the certificate, and the exponent on $N$ is independent of $q$.
\end{proposition}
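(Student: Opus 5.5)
Our plan is to rerun the algorithm of Theorem~\ref{thm:master} with one change: the internal tolerance becomes $\zeta=\varepsilon L_0/4$ instead of $\varepsilon B_0/4$, where $L_0=f(B_b)$ comes from weighted matroid greedy in polynomial time. We then recount the grid using Lemma~\ref{lem:width2} instead of the crude bound $\kappa_jw_j^2\le S_b$. If $L_0=0$, then $\Lambda=0$ by Lemma~\ref{lem:scale-improved}, so $\OPT=0$; we return any basis with $\overline U=0$, which makes the zero-over-zero convention harmless. So assume $L_0>0$.

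\textbf{Step 1: recount the grid.} We reuse the grid construction of Lemma~\ref{lem:grid} with the same subinterval count
\[
N_j=\max\{1,\lceil\sqrt{q\kappa_jw_j^2/(8\zeta)}\rceil\}.
\]
The covering argument did not depend on the value of $\zeta$: every basis still has a center with total penalty at most $\zeta$, so $\Phi_a(\one_B)=f(B)$ and \eqref{eq:global-upper} still holds. By \eqref{eq:width2} and $\zeta=\varepsilon L_0/4$,
\[
q\kappa_jw_j^2/(8\zeta)\le 4qL_0/(2\varepsilon L_0)=2q/\varepsilon .
\]
Hence each coordinate has at most $2+\sqrt{2q/\varepsilon}$ centers, and $|\mathcal A|\le(2+\sqrt{2q/\varepsilon})^q\le(2+q/\varepsilon)^{O(q+1)}$. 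This count is free of $n$. When $q=0$ there is a single branch.

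\textbf{Step 2: the approximation factor.} Lemma~\ref{lem:conversion} holds for any $\zeta$. For the branch with the best certified value $U^\ast\ge\OPT$, the rounded basis satisfies $f(S)\ge\alpha(\tau)U^\ast-\zeta/(1+\tau)-(\text{optimization loss})$. The appendix analysis bounds all losses by constant multiples of $\zeta$, and we only need those losses measured in units of $\zeta$. Since $L_0\le\OPT\le\overline U$ by Lemma~\ref{lem:scale-improved}, an additive loss of $O(\zeta)=O(\varepsilon L_0)$ is at most $O(\varepsilon)\,\overline U$. Rescaling $\varepsilon$ by a constant then gives $f(S)\ge(\alpha(\tau)-\varepsilon)\overline U$. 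The upper bound $\overline U$ stays valid because it comes from exact greedy duality gaps plus the grid covering. This reuse mirrors the role $B_0\le\OPT$ played before.

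\textbf{Step 3: running time per branch (main obstacle).} Each branch must run in time $\mathrm{poly}(N)$ with an exponent independent of $q$, and polynomial in $1/\varepsilon$. The obstacle is that Theorem~\ref{thm:master} only claims work polynomial in the branch count, $N$ and $1/\varepsilon$ jointly, and it measures accuracy relative to $B_0$.

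\begin{itemize}
\item \emph{Bit sizes.} The centers are rationals of the form $l_j+k w_j/N_j$, so their bit length is $\mathrm{poly}(N,\log(q/\varepsilon))$, with no $q$ in an exponent.
\item \emph{Branch concave program.} The smoothing and Frank--Wolfe style iterations converge in $\mathrm{poly}(n,1/\varepsilon)$ steps. The condition is that the smoothness constants, namely $\lambda_{\max}(A+\tau D+P)\le O(\max b_i)$ times the squared diameter, are controlled relative to $\zeta$. Since $\max_i b_i\le\Lambda\le2L_0$, the ratio (curvature scale)$/\zeta$ is $O(n/\varepsilon)$, polynomial as before.
\item \emph{Per-iteration cost.} Each iteration costs one greedy linear optimization plus the evaluation of $q\le n$ projections, which is polynomial in $N$ with an exponent independent of $q$.
\end{itemize}

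Multiplying the branch count from Step 1 by this per-branch cost gives \eqref{eq:time-fpt}. We will absorb the $\mathrm{poly}(1/\varepsilon)$ factor into $(2+q/\varepsilon)^{O(q+1)}$, which is legitimate since the exponent is at least $1$.
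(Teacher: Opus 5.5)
Your proposal is essentially the paper's proof: the tolerance $\zeta=\varepsilon L_0/4$, the grid recount via Lemma~\ref{lem:width2} giving $(2+\sqrt{2q/\varepsilon})^q$ branches, per-branch control through $S_b/\zeta\le 8n/\varepsilon$ (this ratio also governs the dominant smoothing curvature of order $(q+3)^2S_b^2/\zeta$ in Lemma~\ref{lem:bounds}, which your curvature bullet omits), and the loss conversion via $L_0\le\OPT\le\overline U$; no rescaling of $\varepsilon$ is needed, because the loss is at most $\frac52\zeta=\frac58\varepsilon L_0\le\varepsilon\overline U$. One caveat concerns absorbing the $\mathrm{poly}(1/\varepsilon)$ per-branch iteration bound into $(2+q/\varepsilon)^{O(q+1)}$: this works for $q\ge1$ because then $1/\varepsilon<2+q/\varepsilon$, so it is the base and not the exponent that matters, but at $q=0$ the base is $2$ and $\varepsilon^{-O(1)}$ is not $N^{O(1)}$ for binary-encoded $\varepsilon$ (an edge case the paper's own proof also leaves implicit).
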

\begin{proof}
Set the smoothing/optimization tolerance to $\zeta=\varepsilon L_0/4$. In each nonconstant correction coordinate divide the projection interval into
\[
N_j=\max\left\{1,\left\lceil
\sqrt{\frac{q\kappa_jw_j^2}{8\zeta}}\right\rceil\right\}
\]
subintervals; retain the endpoints. Every basis projection has total penalty at most $\zeta$ at one grid point, so the maximum over branch optima bounds $\OPT$, exactly as in Lemma~\ref{lem:grid}. By Lemma~\ref{lem:width2}, the number of grid endpoints is at most
\begin{equation}\label{eq:count-new}
\prod_{j=1}^q(N_j+1)\le
\left(2+\sqrt{\frac{2q}{\varepsilon}}\right)^q.
\end{equation}

The smoothing, conditional-gradient gap, and dyadic-quantization implementation of Section~\ref{sec:algorithm} accepts any such positive rational tolerance. Its analytic bounds use $\sum_i b_i/\zeta\le 8n/\varepsilon$ by Lemma~\ref{lem:scale-improved}; the per-branch bit, oracle, and iteration bounds are consequently polynomial in $N,n,q,1/\varepsilon$, with exponent independent of $q$. 

Choose the branch with the largest certified upper value $\overline U$ and round. The additive loss is at most $\frac52\zeta\le\varepsilon\overline U$, since $L_0\le\OPT\le\overline U$. This proves both the factor and the parameterized running time.
\end{proof}

\section{A stronger spectral coverage conversion}\label{sec:spectral}
The preceding conversion treats the nonlinear part uniformly. We now bound it through the normalized adjacency spectrum, which can be smaller than the maximum vertexwise ratio called total curvature. This produces the final supplied-certificate factor.

After deleting matroid loops as in Proposition~\ref{prop:loops}, delete also vertices with $b_i=0$. They carry no positive-weight edge or modular score, so the optimum of the restricted matroid is unchanged; a basis returned in the restricted matroid extends to an original basis at no change in value. In the rest of this section, $D$ is positive definite. Put
\begin{equation}\label{eq:rho}
\varrho=\lambda_{\max}(D^{-1/2}AD^{-1/2}),
\qquad c=\max_i\frac{d_i}{b_i}.
\end{equation}
The symbol $\varrho$ is different from the edge-addition parameter $\rho$ in Section~\ref{sec:robust}.

The identity below isolates exactly what must be nonnegative for the improved conversion. It replaces the scalar diagonal bound by a spectral bound on the adjacency matrix. We write $z_+=\max\{z,0\}$ for the scalar positive part.

\begin{lemma}[Spectral curvature and exact residual]\label{lem:spectral-residual}
For a nonzero coverage-edge objective, $0<\varrho\le c\le1$ and
$-\varrho D\preceq A\preceq\varrho D$.
For every $x\in[0,1]^V$ and every $u\in[0,1]$,
\begin{equation}\label{eq:residual}
F(x)-(1-u/2)L(x)-uT(x)
=\frac12x^\top(uD-A)x
 +(1-u/2)\sum_{ij\in E}w_{ij}(x_i+x_j-1)_+.
\end{equation}
Consequently, for rational $u\in(0,1]$ with $uD-A\succeq0$,
\begin{equation}\label{eq:spectral-scalar}
uT(x)\le F(x)-(1-u/2)L(x).
\end{equation}
\end{lemma}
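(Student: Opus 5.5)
The proof has three parts: spectral facts about $\varrho$, the exact identity \eqref{eq:residual}, and the consequence \eqref{eq:spectral-scalar}.

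\textbf{Spectral facts.} Set $N=D^{-1/2}AD^{-1/2}$. It is symmetric and entrywise nonnegative. Because the objective has a positive-weight edge, $N$ is nonzero with zero diagonal. So its trace is zero and its largest eigenvalue is positive, which gives $\varrho>0$.

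By Perron--Frobenius, $\varrho$ equals the spectral radius, so every eigenvalue lies in $[-\varrho,\varrho]$. Equivalently $-\varrho I\preceq N\preceq\varrho I$. Congruence by $D^{1/2}$ then yields $-\varrho D\preceq A\preceq\varrho D$.

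For $\varrho\le c$, use the standard bound that the spectral radius of a nonnegative matrix is at most its maximum row sum. The matrix $D^{-1}A$ is similar to $N$, and its row sums are $d_i/b_i\le c$. Finally, $c\le1$ because $b_i=\ell_i+d_i\ge d_i$.

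\textbf{The identity \eqref{eq:residual}.} This is a direct term-by-term expansion, in the style of Lemma~\ref{lem:scalar}. Write $x^\top Ax=2\sum_{ij}w_{ij}x_ix_j$, $x^\top Dx=\sum_i b_ix_i^2$, and $2T(x)=b^\top x-x^\top Dx$. Then
\[
F-uT=\bigl(1-\tfrac u2\bigr)b^\top x+\tfrac12x^\top(uD-A)x .
\]
Next, $b^\top x=\ell^\top x+\sum_{ij}w_{ij}(x_i+x_j)$. Also, for each edge, $\min(1,x_i+x_j)=x_i+x_j-(x_i+x_j-1)_+$. Hence
\[
L(x)=b^\top x-\sum_{ij}w_{ij}(x_i+x_j-1)_+ .
\]
Subtracting $(1-u/2)L$ from $F-uT$ cancels the linear terms exactly. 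What remains is the quadratic form plus $(1-u/2)$ times the positive-part sum, which is \eqref{eq:residual}. Note that the modular scores $\ell$ are absorbed into $b$, so no separate term appears for them.

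\textbf{Consequence \eqref{eq:spectral-scalar}.} The two summands on the right of \eqref{eq:residual} are both nonnegative. The first is nonnegative when $uD-A\succeq0$. The second is nonnegative since $u\le1$ makes $1-u/2>0$ and the weights and positive parts are nonnegative. Rearranging gives the inequality.

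The only delicate step is the Perron--Frobenius and row-sum bound. It needs a nonnegative matrix and positive definite $D$, and both are ensured by the preprocessing that deletes vertices with $b_i=0$. Everything else is bookkeeping.
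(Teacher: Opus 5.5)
Your proof is correct. The identity \eqref{eq:residual} and its consequence are derived exactly as in the paper: write $L(x)=b^\top x-\sum_{ij}w_{ij}(x_i+x_j-1)_+$ and $2T(x)=b^\top x-x^\top Dx$, then cancel the linear terms. The only difference is in the spectral facts. You cite Perron--Frobenius, in its weak form since the graph may be disconnected, to identify $\varrho$ with the spectral radius of $D^{-1/2}AD^{-1/2}$. You then bound that radius by the maximum row sum $c$ of the similar matrix $D^{-1}A$. The paper instead argues with quadratic forms. The sum-of-squares identity $z^\top(cD-A)z=\sum_{ij}w_{ij}(z_i-z_j)^2+\sum_i(cb_i-d_i)z_i^2$ gives $A\preceq cD$ directly. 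The chain $|z^\top Az|\le|z|^\top A|z|\le\varrho\,z^\top Dz$ gives $-\varrho D\preceq A$, which is exactly the piece of Perron--Frobenius you invoke, proved in one line. The paper's route is self-contained and produces the same kind of explicit certificate it reuses later; the curvature-only corollary uses the sign-flipped identity for $A+cD$. Your route is shorter given standard theorems. It also makes explicit two points the paper passes over quickly: $\varrho>0$ via the trace-zero argument, and $c\le1$ via $b_i=\ell_i+d_i\ge d_i$.
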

\begin{proof}
For every real $z$,
\[
z^\top(cD-A)z=\sum_{ij}w_{ij}(z_i-z_j)^2+
\sum_i(cb_i-d_i)z_i^2\ge0.
\]
Thus $\varrho\le c$. Here $|z|$ denotes coordinatewise absolute value. Because the adjacency matrix is nonnegative entrywise,
$|z^\top Az|\le |z|^\top A|z|\le\varrho z^\top Dz$. This proves the two-sided PSD bound. A positive edge makes $\varrho>0$.

Now $L(x)=b^\top x-\sum_{ij}w_{ij}(x_i+x_j-1)_+$ and $T(x)=\frac12(b^\top x-x^\top Dx)$. Substitute these identities into $F-(1-u/2)L-uT$ and cancel the linear terms to obtain the displayed equality. When $uD-A\succeq0$, both terms on its right are nonnegative.
\end{proof}

\subsection{Concavity only along feasible directions}\label{sec:affine}
The full-space condition is stronger than the optimizer needs. If an affine equation is fixed on every basis, the algorithm never moves in a direction that violates it. The following proposition restricts the curvature test accordingly.
\begin{proposition}[Only feasible directions need nonpositive curvature]\label{prop:affine}
Suppose rational affine equations $Hx=h$ hold for all bases. In Theorem~\ref{thm:master}, the condition $A+\tau D+P\succeq0$ can be replaced by
\[
 z^\top(A+\tau D+P)z\ge0\qquad\text{for every }z\in\ker H.
\]
The other assumptions, including $0\preceq P\preceq D$, remain unchanged.
\end{proposition}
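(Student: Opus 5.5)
Every place where Theorem~\ref{thm:master} uses $A+\tau D+P\succeq0$ is a claim that $G_a$, and hence the branch objective $\Phi_a$, is concave. I will show that concavity is only ever needed on the affine hull of $B(M)$, and then check that no other step uses the full-space condition. Let $K=\{x:Hx=h\}$. By hypothesis every basis indicator lies in $K$, so $B(M)\subseteq K$ and every difference of two points of $B(M)$ lies in $\ker H$. Restricting $G_a$ to the segment $x+s(y-x)$ with $x,y\in B(M)$ gives a quadratic in $s$ with second derivative
\[
-(y-x)^\top(A+\tau D+P)(y-x)\le0.
\]
So $G_a$ is concave on $B(M)$, meaning concave along every segment inside it. The cap $L$ is concave everywhere, so $\Phi_a=\min\{L,G_a+\zeta\}$ is concave on $B(M)$. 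Lemma~\ref{lem:scalar}, Lemma~\ref{lem:conversion} and Lemma~\ref{lem:grid} never use the semidefinite condition. Lemma~\ref{lem:grid} needs only $\kappa_jc_jc_j^\top\preceq P\preceq D$, which is unchanged. Therefore \eqref{eq:global-upper} and the multiplicative conversion hold verbatim.

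The main obstacle is the branch optimizer of Appendix~\ref{sec:algorithm}. It smooths the minimum and certifies optimality with a conditional-gradient (Frank--Wolfe) duality gap. Two properties must survive the change.
\begin{enumerate}[label=(\roman*)]
\item \emph{Gap validity.} The bound $\max_{y\in B(M)}\nabla\tilde\Phi(x)^\top(y-x)\ge \tilde\Phi^\ast-\tilde\Phi(x)$ is the first-order concavity inequality along $y-x$, where $\tilde\Phi$ is the smoothed branch objective and $\tilde\Phi^\ast$ its maximum over $B(M)$. This needs concavity only along feasible segments.
\item \emph{Convergence.} The iteration bound needs smoothness along feasible segments. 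That comes from the unchanged upper bound on the Hessian norm, $\|A+\tau D+P\|\le\|A\|+2\|D\|$, which does not depend on definiteness.
\end{enumerate}
I would first check that the smoothing of $\min\{L,G_a+\zeta\}$ preserves concavity when composed only with functions concave on $B(M)$. A log-sum-exp style soft minimum is concave and nondecreasing in each argument, so composing it with functions concave on a convex set gives a function concave on that set. If the appendix smooths the edge terms $\min(1,x_i+x_j)$ separately, those are affine-argument compositions and are unaffected.

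Finally, pipage or exchange rounding preserves $F$ and keeps iterates in $B(M)$, and none of it depends on the certificate. The certified upper value $\overline U$ is the sum of a branch value and a duality gap, both computed inside $B(M)$, so it still bounds $\OPT$ by \eqref{eq:global-upper}. The factor and the branch count then follow exactly as in the proof of Theorem~\ref{thm:master}. If one prefers a formal reduction instead, one can change variables $x=x_0+Zy$ with $Z$ a rational basis of $\ker H$. This is cleaner, but it complicates the greedy linear oracle, so I would present the direct segment argument.
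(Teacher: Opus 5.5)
Your proposal is correct and follows the paper's argument. Every segment of $B(M)$ has its direction in $\ker H$, so $G_a$, the cap $\Phi_a$ and its smoothing are concave on the polytope; this is all that the gap bound and the line search along $[x,v]\subseteq B(M)$ need, while the grid, conversion, derivative and range estimates use only $0\preceq P\preceq D$ and $\tau\le1$. The one point the paper adds is that the weakened hypothesis stays exactly checkable: compress $A+\tau D+P$ to a rational basis of $\ker H$ and test positive semidefiniteness exactly, and use a greedy minimization and maximization to confirm that each row of $Hx=h$ is constant on bases.
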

\begin{proof}
All segments in the base polytope have directions in $\ker H$. Thus $G_a$ is concave on this polytope, which suffices for the cap, smoothing, line search, and gap bound. The remaining estimates use only the unchanged assumptions. Let $N$ denote a rational nullspace-basis matrix in this proof. It gives an exact test $N^\top(A+\tau D+P)N\succeq0$. For each row of $H$, equality on all bases can be checked with a minimum and maximum greedy optimization.
\end{proof}
This supplied-equation variant does not itself choose the affine space. Section~\ref{sec:intrinsic} computes the span of all basis differences and determines the correction rank on that space.

\subsection{The stronger supplied-certificate theorem}
We combine the spectral inequality, the feasible-value grid, and concavity on the actual feasible directions. The parameter $u$ bounds the objective spectrum; the parameter $t$ controls the correction. They have different roles.

\begin{theorem}[Spectral supplied-certificate factor]\label{thm:joint}
Suppose the rational correction from Theorem~\ref{thm:master} satisfies the weaker condition
\begin{equation}\label{eq:restricted-certificate}
 v^\top(A+tD+P)v\ge0
 \quad \text{for all }v\in\mathcal T,
 \qquad \mathcal T=\operatorname{span}\{\one_B-\one_{B'}:B,B'\in\mathcal B\},
\end{equation}
with $0\preceq P\preceq D$ and rational $t\in[0,1]$. Let $u\in(0,1]$ be rational and satisfy $uD-A\succeq0$. For every rational $\varepsilon\in(0,3/4)$ there is a deterministic certified algorithm returning a basis $S$ and $\overline U\ge\OPT$ such that
\begin{equation}\label{eq:joint}
 f(S)\ge\left(1-\frac{ut}{2(u+t)}-\varepsilon\right)\overline U,
\end{equation}
in time of the form~\eqref{eq:time-fpt}.
\end{theorem}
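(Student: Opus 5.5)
The proof will reuse the certified branch algorithm of Proposition~\ref{prop:fpt} unchanged, and change only two ingredients. The full-space concavity hypothesis is replaced by concavity along $\mathcal T$, via Proposition~\ref{prop:affine}. The multiplicative conversion of Lemma~\ref{lem:conversion} is replaced by a sharper one that uses the spectral inequality \eqref{eq:spectral-scalar} instead of Lemma~\ref{lem:scalar}. The branch functions remain $G_a=F+tT-\tfrac12\sum_j\kappa_j(c_j^\top x-a_j)^2$ and $\Phi_a=\min\{L,G_a+\zeta\}$, with $\zeta=\varepsilon L_0/4$ as in Proposition~\ref{prop:fpt}.

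\textbf{Step 1 (concavity on the feasible region).} Every segment in $B(M)$ is a difference of two convex combinations of basis indicators. Such a difference is a combination of vectors $\one_B-\one_{B'}$ whose coefficients sum to zero, so it lies in $\mathcal T$. Hence $\nabla^2G_a=-(A+tD+P)$ is negative semidefinite along every direction the optimizer uses, by \eqref{eq:restricted-certificate}. To invoke Proposition~\ref{prop:affine} literally, I take $H$ to be a rational matrix with $\ker H=\mathcal T$; the affine hull of $B(M)$ is $\{x:Hx=h\}$, and a rational basis of $\mathcal T$ can be produced from single-element exchanges. In any case, the smoothing, line-search and gap arguments of Appendix~\ref{sec:algorithm} only evaluate $G_a$ on segments inside $B(M)$, so concavity there is all they need.

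\textbf{Step 2 (the new conversion; main step).} Fix $x\in[0,1]^V$ and put $s=\Phi_a(x)$. Dropping the nonnegative penalty gives $s\le F+tT+\zeta$, and the cap gives $L\ge s$. Since $u>0$ and $uD-A\succeq0$, Lemma~\ref{lem:spectral-residual} yields
\[
T\le\bigl(F-(1-u/2)L\bigr)/u\le\bigl(F-(1-u/2)s\bigr)/u .
\]
Note that $t\ge0$ is needed here so that the bound on $T$ can be multiplied by $t$. Substituting this into the first inequality and multiplying through by $u$ gives
\[
s\,\bigl(u+t-ut/2\bigr)\le F\,(u+t)+u\zeta .
\]
Since $u+t>0$, this rearranges to
\[
F(x)\ge\Bigl(1-\tfrac{ut}{2(u+t)}\Bigr)\Phi_a(x)-\tfrac{u}{u+t}\,\zeta .
\]
As in Lemma~\ref{lem:conversion}, no sign assumption on $s$ is required. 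The additive term is at most $\zeta$, matching the earlier $\zeta/(1+\tau)$ up to a constant.

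\textbf{Step 3 (grid, rounding, and accounting).} Lemma~\ref{lem:width2} uses only $P\preceq D$, so the grid count \eqref{eq:count-new} and the covering property of Lemma~\ref{lem:grid} carry over. In particular $\Phi_a(\one_B)=f(B)$ for a suitable center $a$, which gives \eqref{eq:global-upper} and hence $\overline U\ge\OPT$. The chosen branch certifies an approximate maximizer $\bar x$ as an explicit convex combination of bases. Applying Step 2 at $\bar x$, then exchange rounding (which does not decrease $F$), and then adding the smoothing and stopping losses of Appendix~\ref{sec:algorithm}, gives a total additive loss of $O(\zeta)\le\varepsilon\overline U$, since $L_0\le\OPT\le\overline U$. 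The running time is that of Proposition~\ref{prop:fpt}.

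The main obstacle is Step 2: making sure the substitution of \eqref{eq:spectral-scalar} goes in the right direction (this uses $t\ge0$ and $L\ge s$) and keeping track of the constant in front of $\zeta$. A secondary point to check is that the appendix's gap certificate relies on concavity only within $B(M)$ and never along directions outside $\mathcal T$.
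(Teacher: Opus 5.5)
Your proposal is correct and follows essentially the same route as the paper. It uses concavity of $G_a$ only along $\mathcal T$ (every segment of $B(M)$ is parallel to it), combines the spectral inequality \eqref{eq:spectral-scalar} with the cap $L\ge\Phi_a$ to obtain exactly the pointwise conversion \eqref{eq:point-joint} with additive term $\frac{u}{u+t}\zeta$, and reuses the grid, branch optimizer, and exchange rounding of Proposition~\ref{prop:fpt} unchanged. The paper makes your $O(\zeta)$ explicit as the recovery loss $3\zeta/2$ plus the conversion loss of at most $\zeta$, giving $5\zeta/2\le\varepsilon\overline U$ for $\zeta=\varepsilon L_0/4$; your replacement of $L$ by $s$ in the bound on $T$ also silently uses $1-u/2>0$, which holds since $u\le1$.
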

\begin{proof}
Use the grid, the concave functions $\Phi_a$, and the certified branch optimizer from Sections~\ref{sec:algorithm} and~\ref{sec:fpt}, with $t$ in place of $\tau$. Condition~\eqref{eq:restricted-certificate} suffices: every segment of $B(M)$ is parallel to $\mathcal T$, so $G_a$ and its capped, smoothed form are concave on the feasible region. The objective and derivative bounds from Section~\ref{sec:algorithm} continue to use only $0\preceq P\preceq D$. 

For $h=\Phi_a(x)$, drop the nonnegative projection penalty, apply Lemma~\ref{lem:spectral-residual}, and use $h\le L(x)$:
\[
\begin{aligned}
h&\le F(x)+tT(x)+\zeta\\
&\le(1+t/u)F(x)-(t/u)(1-u/2)L(x)+\zeta\\
&\le(1+t/u)F(x)-(t/u)(1-u/2)h+\zeta.
\end{aligned}
\]
Rearrangement gives the pointwise inequality
\begin{equation}\label{eq:point-joint}
 F(x)\ge\left(1-\frac{ut}{2(u+t)}\right)\Phi_a(x)
 -\frac{u}{u+t}\zeta.
\end{equation}

As in Proposition~\ref{prop:fpt}, the largest certified branch value upper-bounds $\OPT$; the selected branch is within $3\zeta/2$ of that value. Objective-preserving basis-exchange rounding completes the construction. Because $0\le 1-ut/[2(u+t)]\le1$ and $u/(u+t)\le1$, the total additive loss is at most $5\zeta/2\le\varepsilon\overline U$. The same grid and per-branch estimates prove~\eqref{eq:time-fpt}.
\end{proof}

\section{Intrinsic matroid directions and correction rank}\label{sec:intrinsic}
Theorem~\ref{thm:joint} is useful once a correction is supplied. We now compute the feasible-direction space, identify the necessary rank, and construct a rational correction of no greater rank after a small threshold increase. The computational step is proved in Appendix~\ref{app:rational-correction}; it converts the preceding supplied-certificate result into an input-based theorem.

\subsection{Computing the directions that the algorithm can use}
The exchange graph of one basis identifies which coordinate sums are fixed over every basis. Its connected components therefore give the entire direction space, not only a selected family of feasible exchanges.

\begin{lemma}[The feasible-direction space is oracle computable]\label{lem:tangent}
Fix a basis $B$. Make a graph on $V$ with edges $ij$ for $i\in B$, $j\notin B$ whenever $B-i+j$ is a basis. Let $C_1,\ldots,C_h$ be its connected components. Then
\begin{equation}\label{eq:tangent}
 \mathcal T=\{v\in\R^V:v(C_a)=0\text{ for }a=1,\ldots,h\}.
\end{equation}
One can construct a rational full-column-rank matrix $Z$ spanning $\mathcal T$ with at most $r(n-r)$ basis-exchange independence tests, after computing the initial basis.
\end{lemma}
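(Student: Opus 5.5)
We prove the two inclusions in \eqref{eq:tangent} separately, writing $\mathcal C=\{v:v(C_a)=0\ \forall a\}$. Then we read off a rational basis of $\mathcal C$ from the component structure.

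\textbf{The inclusion $\mathcal T\subseteq\mathcal C$.} It suffices to show $|B'\cap C_a|=|B\cap C_a|$ for every basis $B'$ and every component. We use the standard fact that the connected components of the exchange graph of one basis are exactly the connected components of the matroid. More precisely, $M$ is the direct sum of its restrictions $M|C_a$, so that $\rank(M)=\sum_a\rank(C_a)$. The proof of this fact goes through fundamental circuits. For $j\notin B$, the fundamental circuit $C(B,j)$ consists of $j$ together with exactly those $i\in B$ for which $B-i+j$ is a basis. Every fundamental circuit therefore lies inside one component. Since the fundamental circuits of a basis generate the circuit-connectivity relation, no circuit meets two components. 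Hence $M$ is a direct sum over the components, and every basis $B'$ meets $C_a$ in a basis of $M|C_a$. So $|B'\cap C_a|=\rank(C_a)$, a value independent of $B'$, and each difference $\one_B-\one_{B'}$ lies in $\mathcal C$. I expect this step to be the main obstacle. If citing the direct-sum theorem is not acceptable, I would argue directly. Suppose a circuit $K$ meets two components. Then the standard circuit-elimination and strong basis-exchange argument produces an exchange edge joining them, which is a contradiction.

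\textbf{The inclusion $\mathcal C\subseteq\mathcal T$.} Each exchange edge $ij$ gives the direction $\one_{B-i+j}-\one_B=e_j-e_i\in\mathcal T$. Summing such vectors along paths shows $e_x-e_y\in\mathcal T$ for all $x,y$ in the same component. The vectors $e_x-e_y$, taken within each component, span $\mathcal C$, because $\mathcal C$ consists of vectors with zero sum on each block of a partition. This gives equality, and also $\dim\mathcal T=n-h$.

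\textbf{Computation.} Compute the initial basis $B$. For each pair $i\in B$, $j\notin B$, test independence of $B-i+j$; this is $r(n-r)$ oracle calls. Find the components by graph search. For $Z$, fix a root $x_a\in C_a$ in each component and take the columns $e_y-e_{x_a}$ for every $y\in C_a\setminus\{x_a\}$. These $n-h$ integer columns are linearly independent, since each nonroot coordinate appears in exactly one column. They lie in $\mathcal C=\mathcal T$, so $Z$ has full column rank and spans $\mathcal T$.
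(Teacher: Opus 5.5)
Your reverse inclusion and your construction of $Z$ match the paper's. The difference is in the inclusion $\mathcal T\subseteq\mathcal C$.

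You route it through the structural theorem that the components of the exchange (fundamental) graph of a basis are exactly the connected components of $M$, so that $M=\bigoplus_a M|C_a$. That theorem is true and standard. However, it carries the entire content of this step, and your justification (``the fundamental circuits of a basis generate the circuit-connectivity relation'') only restates it. The fallback via circuit elimination and strong exchange is not spelled out.

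The paper needs much less and proves it directly by counting:
\begin{itemize}
\item For $j\in C_a\setminus B$, the fundamental circuit $C(B,j)$ lies in $C_a$, so $j\in\operatorname{cl}(B\cap C_a)$.
\item Hence $r(C_a)=|B\cap C_a|$, and $\sum_a r(C_a)=|B|=r(V)$.
\item Any other basis $B'$ has $|B'\cap C_a|\le r(C_a)$, while $\sum_a|B'\cap C_a|=r(V)$, so equality holds in every component.
\end{itemize}
This argument is self-contained and gives exactly what the lemma requires.

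Your route yields more: the full direct-sum decomposition, under which the bases of $M$ are exactly the unions of bases of the restrictions $M|C_a$. If you want that extra structure, the identity $\sum_a r(C_a)=r(V)$ supplies it without any appeal to circuit connectivity. Submodularity then gives $r(S)=\sum_a r(S\cap C_a)$ for every $S$. So a circuit $K$ meeting two components would satisfy $r(K)=\sum_a|K\cap C_a|=|K|$, since each $K\cap C_a$ is a proper subset of $K$ and hence independent. That is a contradiction, and it closes the step you had only cited.
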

\begin{proof}
Write $r(S)$ for the matroid rank of a set $S$. For each nonbasis element $j$, the fundamental circuit, the unique minimal dependent subset of $B+j$, consists of $j$ and the basis elements $i$ for which $B-i+j$ is a basis. Every component $C_a$ is therefore spanned by $B\cap C_a$ in the matroid closure, so its rank is $|B\cap C_a|$. The component ranks sum to $r(V)$. Every other basis $B'$ has $|B'\cap C_a|\le r(C_a)$ and the sums of both sides equal $r(V)$, so equality holds in every component. Thus every basis difference has zero sum in each component.

Conversely, every edge $ij$ of the exchange graph yields a feasible basis difference $e_i-e_j$ (up to sign). Edge differences along a connected graph span its coordinate-sum-zero vectors. Taking all components proves the reverse inclusion. Choose one root $i_a\in C_a$ and columns $e_i-e_{i_a}$ for the other vertices of $C_a$; these columns give $Z$.
\end{proof}

Put $s=\dim\mathcal T$ and assume $s>0$. Let
\begin{equation}\label{eq:pencil}
 K=Z^\top DZ\succ0,
 \qquad H_M=K^{-1/2}Z^\top AZK^{-1/2}.
\end{equation}
The eigenvalues $\mu_1\le\cdots\le\mu_s$ of $H_M$ are independent of the rational choice of basis $Z$. They lie in $[-\varrho,\varrho]$ by Lemma~\ref{lem:spectral-residual}. For a symmetric matrix $Q$, write $n_-(Q)$ for its number of strictly negative eigenvalues.

The normalized restricted matrix separates the rank budget from the threshold. A correction must cover every strictly negative direction, and correcting the negative spectral part is sufficient.

\begin{proposition}[Exactly how many directions need correction]\label{prop:inertia}
For rational $t\ge0$, the minimum rank of a \emph{real} matrix $P$ satisfying
\[
0\preceq P\preceq D,
\qquad v^\top(A+tD+P)v\ge0\quad(v\in\mathcal T)
\]
is
\begin{equation}\label{eq:inertia}
 q_M(t)=n_-(Z^\top(A+tD)Z)
 =\#\{i:\mu_i<-t\}.
\end{equation}
Consequently, for a rank budget $q\ge0$, the minimum real threshold is
\begin{equation}\label{eq:tauq}
\tau_q=\begin{cases}\max\{0,-\mu_{q+1}\},&q<s,\\0,&q\ge s.\end{cases}
\qquad 0\le\tau_q\le\varrho.
\end{equation}
\end{proposition}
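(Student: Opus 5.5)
The plan is to reduce to the normalized restricted matrix and then apply eigenvalue interlacing for the lower bound and the spectral decomposition for the upper bound.

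\textbf{Reduction to normalized coordinates.} First I would show that $q_M(t)$ does not depend on the basis $Z$. Because $Z$ has full column rank and spans $\mathcal T$, the condition ``$v^\top Qv\ge0$ for $v\in\mathcal T$'' is equivalent to $Z^\top QZ\succeq0$. Next apply the congruence $Z^\top(A+tD)Z=K^{1/2}(H_M+tI)K^{1/2}$. Sylvester's law of inertia then gives
\[
n_-(Z^\top(A+tD)Z)=n_-(H_M+tI)=\#\{i:\mu_i<-t\}.
\]
A change of rational basis $Z\mapsto ZR$ with $R$ invertible changes $H_M$ only by an orthogonal similarity, since $K^{-1/2}R^{-\top}\cdots$ reduces to an orthogonal matrix. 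So the $\mu_i$ are intrinsic.

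\textbf{Lower bound.} Let $P$ be feasible with $\rank P=k$. Then $Z^\top(A+tD)Z+Z^\top PZ\succeq0$, and $Z^\top PZ$ has rank at most $k$. Suppose $k<q_M(t)$. The negative eigenspace $\mathcal N$ of $Z^\top(A+tD)Z$ has dimension $q_M(t)$, so it meets $\ker(Z^\top PZ)$ in a nonzero vector $y$. For that $y$,
\[
y^\top(Z^\top(A+tD)Z+Z^\top PZ)y=y^\top Z^\top(A+tD)Zy<0,
\]
which is a contradiction. The same argument is the Weyl inequality for rank-$k$ perturbations. Note that this direction does not even use $P\preceq D$.

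\textbf{Upper bound (the main obstacle).} I need a real $P$ of rank exactly $q_M(t)$ with $0\preceq P\preceq D$ that fixes the restricted form. I would build it in normalized coordinates. Let $u_i$ be orthonormal eigenvectors of $H_M$ with $\mu_i<-t$, and set $w_i=K^{-1/2}u_i$. Then define
\[
P=\sum_{i:\mu_i<-t}(-\mu_i-t)\,DZw_i\,(DZw_i)^\top.
\]
This $P$ is PSD of the required rank. On $\mathcal T$ it satisfies $K^{-1/2}Z^\top PZK^{-1/2}=\sum(-\mu_i-t)u_iu_i^\top$, because $w_i^\top Z^\top DZw_j=u_i^\top u_j$. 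Hence the restricted normalized form becomes $H_M+tI+\sum(-\mu_i-t)u_iu_i^\top\succeq0$. It remains to check $P\preceq D$, which is the delicate step. Using the bound $|\mu_i|\le\varrho\le1$ from Lemma~\ref{lem:spectral-residual}, I get $-\mu_i-t\le1$. It then suffices that $\sum_i DZw_iw_i^\top Z^\top D\preceq D$. Equivalently, I need $\sum_i D^{1/2}Zw_i(D^{1/2}Zw_i)^\top\preceq I$, and this holds because the vectors $D^{1/2}Zw_i$ are orthonormal: $w_i^\top Z^\top DZw_j=\delta_{ij}$. So the sum is an orthogonal projector. I expect the main care to lie in keeping the $D$-weighting consistent here, rather than in the simpler choice $P=\sum c_ic_i^\top$ with $c_i=Zw_i$, which would not satisfy the bound against $D$ directly.

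\textbf{Threshold formula.} The count $\#\{i:\mu_i<-t\}$ is at most $q$ exactly when $\mu_{q+1}\ge-t$ (for $q<s$), or always (for $q\ge s$). The minimal $t\ge0$ is therefore $\max\{0,-\mu_{q+1}\}$, and the bound $\tau_q\le\varrho$ follows from $\mu_{q+1}\ge-\varrho$. Because the set of feasible $t$ is closed, the minimum is attained.
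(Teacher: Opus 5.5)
Your proposal is correct and follows essentially the same route as the paper's proof. For the lower bound you intersect the negative eigenspace of $Z^\top(A+tD)Z$ with the kernel of the low-rank correction. For the upper bound your matrix is exactly $P_*=DURU^\top D$ with $U=ZK^{-1/2}$ and $R=(-H_M-tI)_+$, since $DZw_i=DUu_i$. The bound $P_*\preceq D$ then follows, as in the paper, from the orthonormality of the columns of $D^{1/2}U$ together with $-\mu_i-t\le\varrho\le1$.
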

\begin{proof}
The negative eigenspace of the symmetric form $Z^\top(A+tD)Z$ has dimension $k=q_M(t)$. A correction of rank smaller than $k$ has a nonzero kernel vector on this space, where the form remains negative; hence its rank must be at least $k$.

For sufficiency, set $U=ZK^{-1/2}$, so $U^\top DU=I$, and put $R=(-H_M-tI)_+$, where the positive spectral part replaces negative eigenvalues by zero. The matrix
\[
P_*=DU R U^\top D
\]
is positive semidefinite with rank $k$. Since the eigenvalues of $R$ are in $[0,\varrho]\subseteq[0,1]$, the matrix $D^{-1/2}P_*D^{-1/2}$ is bounded by the identity: the columns of $D^{1/2}U$ are orthonormal. Moreover,
$U^\top(A+tD+P_*)U=H_M+tI+R\succeq0$.
This proves equality in~\eqref{eq:inertia};~\eqref{eq:tauq} follows directly by ordering the eigenvalues.
\end{proof}

\subsection{Rational certificate construction with threshold slack}
The minimum-rank correction is initially real. The next lemma allows a small increase in threshold to create strict margins. It constructs a rational factor with the prescribed number of columns; approximating that factor preserves the rank bound and both matrix inequalities. Exact univariate algebra supplies the needed approximation algorithm, not only existence of a short representation.

\begin{lemma}[Polynomial-time rational low-rank correction with slack]\label{lem:rational-correction}
Let $t_0\in[0,1]$ be rational, let $k=q_M(t_0)$, and let $\delta\in(0,1]$ be rational. A deterministic algorithm computes rational vectors $a_1,\ldots,a_k$ and nonnegative rational factor weights (zero factors may be omitted) defining a rank-at-most-$k$ matrix $\widehat P$, and a rational $\widehat t$ satisfying
\begin{equation}\label{eq:rational-certificate}
0\preceq\widehat P\preceq D,
\quad
Z^\top(A+\widehat tD+\widehat P)Z\succeq0,
\quad
t_0\le\widehat t\le\min\{1,t_0+\delta\}.
\end{equation}
The running time and the rational-factor encoding length are polynomial in the rational input encoding and $\log(1/\delta)$, with absolute polynomial exponents.
\end{lemma}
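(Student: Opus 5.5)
The plan is to build the correction in the normalized coordinates of Proposition~\ref{prop:inertia}, but to replace orthonormal eigenvectors by exact rational data wherever possible. Let $Z$ be the rational basis of Lemma~\ref{lem:tangent} and $K=Z^\top DZ\succ0$. We work with the rational symmetric pencil $(Z^\top AZ,K)$ instead of the irrational $H_M$. Its generalized eigenvalues $\mu_i$ are the roots of $\chi(\lambda)=\det(Z^\top AZ-\lambda K)$, a rational polynomial of degree $s$ computable exactly with polynomial bit length.

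First, factor $\chi$ over $\Q$ into square-free irreducible factors and isolate all real roots with rational intervals. Refining these intervals to width below $\delta/4$ costs polynomial time in $\log(1/\delta)$. This determines $k=q_M(t_0)$ exactly: count the roots with $\mu<-t_0$, where the comparison $\mu=-t_0$ is decided by evaluating the factor at $-t_0$. Next choose a rational $\widehat t\in[t_0,\min\{1,t_0+\delta\}]$ such that $-\widehat t$ lies strictly between roots, with an explicit gap $\gamma>0$ to every root of $\chi$. The number of roots below $-\widehat t$ must remain $k$. If $t_0=1$, no root lies below $-1$ because $\mu_i\ge-\varrho\ge-1$, so $k=0$, $\widehat P=0$ and $\widehat t=1$ work. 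If $t_0<1$, take $\widehat t$ slightly above $t_0$ but below the next root magnitude, using the isolating intervals. A root separation bound (Mahler) makes $\log(1/\gamma)$ polynomial in the input size.

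Second, for each negative root $\mu<-\widehat t$, take the irreducible factor $p$ containing it. Compute a basis of $\ker(Z^\top AZ-\lambda K)$ over $\Q[\lambda]/(p)$ by exact Gaussian elimination, keeping the vectors unnormalized. Evaluate at the isolated real root to get real generalized eigenvectors $y$, and $K$-orthogonalize within each repeated eigenspace only symbolically, via Gram determinants. The ideal correction is $P_*=\sum D Z y_i y_i^\top Z^\top D\,(-\mu_i-\widehat t)/(y_i^\top K y_i)$, taken over these $k$ directions. Scaling it by a factor $(1-\eta)$ gives a strict margin $P\preceq(1-\eta)D$. The gap $\gamma$ gives strict positivity $Z^\top(A+\widehat tD+P_*)Z\succeq\gamma' K$ on the complementary directions. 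On the corrected directions the eigenvalue becomes $-\widehat t+\widehat t$ plus the slack, and to obtain strictness there we use the correction weight $(-\mu_i-t_0)$ instead, which still satisfies $\le\varrho\le1$. Only at the very end do we round the vectors $y_i$ and the scalar weights to rationals with precision $2^{-L}$. Standard eigenvalue perturbation (Weyl) together with the two strict margins shows that both matrix inequalities survive, provided $L$ is polynomial in the input and $\log(1/\delta)$. The rank stays at most $k$ because we keep exactly $k$ rank-one terms $\kappa_j a_ja_j^\top$ with $a_j=DZ\tilde y_j$.

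The main obstacle is the bit-complexity control of the approximation step. We must bound the norms of the unnormalized eigenvectors and of $1/(y^\top Ky)$ from below, and bound all margins away from zero, by $2^{-\mathrm{poly}}$. I would derive these from resultant and Mahler-measure bounds on the algebraic numbers involved, which come from norms over $\Q[\lambda]/(p)$. I would also avoid any common splitting field by treating each irreducible factor separately, since cross terms between different eigenspaces vanish by $K$-orthogonality.
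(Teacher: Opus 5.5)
Your proposal is correct and follows essentially the paper's route. Both factor $\det(Q-XK)$ over $\Q$, work in one single-root field per selected eigenvalue with unnormalized $K$-orthogonal kernel bases, use a shrunken correction with weights $-\mu_i-t_0$ plus a raised threshold for strict margins, and round only at the end. The differences are minor. You absorb $1/\nu_j$ into rational weights $\kappa_j$, so no square roots are needed, whereas the paper encloses $\sqrt{\cdot}$ and uses unit weights. Your root-avoiding choice of $\widehat t$ is unnecessary, because the weights $-\mu_i-t_0$ already give the uniform margin $\widehat t-t_0-\eta$ on all of $\mathcal T$. What you do need, and leave unstated, is $\eta<\widehat t-t_0$; the paper takes $\widehat t=t_0+\delta$ and $\eta=\delta/4$.
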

\begin{proof}
Appendix~\ref{app:rational-correction} gives the complete construction. Lemma~\ref{lem:factor-computation} computes a rational approximation to a factor of a shrunken real correction, with exactly the permitted column count. The positive margins then preserve both matrix inequalities. The proof bounds all intermediate encodings and the computation time; checking a proposed factor alone would not establish this algorithmic statement.
\end{proof}

The factor construction in Appendix~\ref{app:rational-correction} uses polynomial factorization over $\Q$~\cite{LLL82} and univariate real-root algorithms~\cite[Chapter~10]{BPR06}. Its eigenspace computations stay in one single-root number field at a time, with unnormalized bases. Sobczyk's eigenvalue, gap, and projector algorithms~\cite[Corollaries~4.2--4.5 in the full version]{Sobczyk25} are relevant alternatives, but are not invoked as a black-box theorem for the required exact-column-count factor. The present appendix proves that reduction directly.

\subsection{An input-based approximation--complexity tradeoff}
The rank threshold can be approximated by exact inertia bisection. Lemma~\ref{lem:rational-correction} then produces a suitable correction in polynomial bit complexity. Combining these steps with the supplied-certificate theorem gives the following guarantee, with all threshold and optimization errors included in $\varepsilon$.

\begin{theorem}[Intrinsic spectral tradeoff]\label{thm:intrinsic-main}
Consider weighted graph coverage plus nonnegative modular rewards under an arbitrary matroid, with rational input and an independence oracle. Let $\varrho$ and $\tau_q$ be defined by~\eqref{eq:rho} and~\eqref{eq:tauq} after objective-preserving preprocessing. For an integer rank budget $q\ge0$ and rational $0<\varepsilon<3/4$, a deterministic algorithm returns an independent set $S$ and a rational upper bound $\overline U$ such that
\begin{equation}\label{eq:intrinsic-result}
\boxed{\quad \OPT\le\overline U,
\qquad f(S)\ge\left(1-\frac{\varrho\tau_q}{2(\varrho+\tau_q)}-\varepsilon\right)\overline U.\quad}
\end{equation}
The loss fraction is interpreted as zero when $\varrho=\tau_q=0$. The runtime and independence-oracle complexity have the form
\begin{equation}\label{eq:intrinsic-runtime}
(2+q/\varepsilon)^{O(q+1)}N^{O(1)},
\end{equation}
where $N$ is the binary input length. The exponent of $N$ is independent of $q$. The automatic construction uses Lemma~\ref{lem:rational-correction}, whose exact-algebraic algorithm has polynomial bit complexity independently of $q$.
\end{theorem}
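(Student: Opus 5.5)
The proof assembles Theorem~\ref{thm:joint} with Lemma~\ref{lem:rational-correction}, after preprocessing and after approximating $\tau_q$. It has four parts: preprocessing, choosing the threshold, building the correction, and controlling the perturbation of the factor.

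\textbf{Preprocessing and trivial cases.} First apply Proposition~\ref{prop:loops}, then delete the zero-value vertices as in Section~\ref{sec:spectral}, so that $D\succ0$. If $S_b=0$, or if there are no edges, then $\varrho=0$ and the problem is modular. Greedy then returns an exact basis, and we set $\overline U=f(S)$; this is the "zero-over-zero" case.

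If $\dim\mathcal T=0$, there is a unique basis. It is optimal, and again $\overline U=f(S)$.

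Otherwise, compute $Z$ by Lemma~\ref{lem:tangent}. Set $u=\varrho$ conceptually. Since $\varrho$ may be irrational, we need a rational $u\ge\varrho$ with $u\le\varrho+\delta$ and $uD-A\succeq0$. We find it by bisection on $n_-(uD-A)$ using exact rational inertia tests (e.g.\ $LDL^\top$). Cap $u$ at $1$, which is allowed because $\varrho\le c\le1$ by Lemma~\ref{lem:spectral-residual}.

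\textbf{Choosing the threshold.} Next find a rational $t_0\in[\tau_q,\tau_q+\delta]\cap[0,1]$ with $q_M(t_0)\le q$. By Proposition~\ref{prop:inertia}, $q_M(t)=n_-(Z^\top(A+tD)Z)$ is nonincreasing in $t$. It satisfies $q_M(t)\le q$ exactly when $t\ge\tau_q$, and $\tau_q\le\varrho\le1$. So exact-inertia bisection on $[0,1]$ with $O(\log(1/\delta))$ steps finds such a $t_0$. If $q_M(0)\le q$, take $t_0=0$.

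\textbf{Building the correction and running the algorithm.} Apply Lemma~\ref{lem:rational-correction} with slack $\delta$. It returns $\widehat P$ of rank $k\le q$ and a rational $\widehat t\le\min\{1,\tau_q+2\delta\}$ satisfying~\eqref{eq:restricted-certificate}. Feed this certificate, together with $u$, into Theorem~\ref{thm:joint} with accuracy $\varepsilon/2$. This yields
\[
f(S)\ge\bigl(1-g(u,\widehat t)-\varepsilon/2\bigr)\overline U,\qquad g(u,t)=\frac{ut}{2(u+t)} .
\]

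\textbf{Main obstacle: continuity of the loss factor.} We must show that $g(u,\widehat t)\le g(\varrho,\tau_q)+\varepsilon/2$. The function $g$ is increasing in each argument, with $\partial_u g=t^2/(2(u+t)^2)\le1/2$, and similarly for $t$. Hence
\[
g(u,\widehat t)-g(\varrho,\tau_q)\le\tfrac12\bigl(\delta+2\delta\bigr).
\]
This bound holds even near $(0,0)$, since $g$ is $\tfrac12$-Lipschitz on $[0,\infty)^2\setminus\{0\}$ and extends continuously by $0$ at the origin. Choosing $\delta=\varepsilon/4$ closes the gap.

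\textbf{Running time and bit complexity.} The bisections cost $O(\log(1/\varepsilon))$ exact rational tests, each polynomial in $N$. Lemma~\ref{lem:rational-correction} runs in polynomial time in $N$ and $\log(1/\delta)$, independently of $q$. The certificate it produces therefore has encoding length $N'=\operatorname{poly}(N,\log(1/\varepsilon))$. The bound~\eqref{eq:time-fpt} then gives $(2+q/\varepsilon)^{O(q+1)}N'^{O(1)}$. This has the form~\eqref{eq:intrinsic-runtime}, since the $\log(1/\varepsilon)$ factors are absorbed into the first term.

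Finally, the returned basis lifts back to an independent set of the original instance with the same value. So both the bound $\overline U\ge\OPT$ and the guarantee on $f(S)$ transfer to the original input.
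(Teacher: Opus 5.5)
Your proposal is correct and follows the paper's proof essentially step for step: preprocessing and the trivial cases, exact rational bisection for $u\ge\varrho$ and for $t_0\in[\tau_q,\tau_q+\delta]$, Lemma~\ref{lem:rational-correction} at $(t_0,\delta)$, Theorem~\ref{thm:joint} at accuracy $\varepsilon/2$, and the coordinatewise $\tfrac12$ bound on the partial derivatives of the loss term. The differences from the paper are immaterial: you take $\delta=\varepsilon/4$ rather than $\varepsilon/8$, giving total loss $7\varepsilon/8$ instead of $11\varepsilon/16$, and you cap $u$ at $1$ rather than at $c$. One small correction to your running-time step: for $q=0$ the first factor in~\eqref{eq:intrinsic-runtime} is constant, so the $\log(1/\varepsilon)$ terms should be charged to $N$ (taken to include the encoding of $\varepsilon$) rather than absorbed into $(2+q/\varepsilon)^{O(q+1)}$.
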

\begin{proof}
Use Proposition~\ref{prop:loops} to delete loops while preserving their incident rewards; delete remaining zero-singleton elements and later extend a solution to the original matroid. If all rewards vanish, solve exactly. If $A=0$, weighted matroid greedy is exact. If $s=0$, the reduced matroid has a unique basis.

Otherwise compute $Z$ by Lemma~\ref{lem:tangent}. Let $\delta=\varepsilon/8$. Use exact rational PSD bisection to find $u\in[\varrho,\varrho+\delta]$ with $uD-A\succeq0$; one may cap $u$ at $c$. Similarly use exact rational inertia bisection on $Z^\top(A+tD)Z$ to find $t_0\in[\tau_q,\tau_q+\delta]$, capped at one, with $q_M(t_0)\le q$. Both bisections have polynomial rational bit complexity. Apply Lemma~\ref{lem:rational-correction} at $(t_0,\delta)$. It returns a rank-at-most-$q$ rational correction at $\widehat t\in[t_0,\min(1,t_0+\delta)]$, including the zero-correction boundary cases. Its factor construction and exact PSD acceptance are proved in Appendix~\ref{app:rational-correction}. Apply Theorem~\ref{thm:joint} with internal accuracy $\varepsilon/2$.

For $u,t\ge0$ not both zero, write $\alpha(u,t)=1-ut/[2(u+t)]$. Its partial derivatives are bounded in absolute value by $1/2$:
\[
\left|\frac{\partial\alpha}{\partial u}\right|
=\frac{t^2}{2(u+t)^2}\le\frac12,
\qquad
\left|\frac{\partial\alpha}{\partial t}\right|
=\frac{u^2}{2(u+t)^2}\le\frac12.
\]

The continuous extension at $(0,0)$ obeys the same coordinatewise bound. Here $u-\varrho\le\delta$ and $\widehat t-\tau_q\le2\delta$, so
$\alpha(u,\widehat t)\ge\alpha(\varrho,\tau_q)-3\delta/2$. Including the internal loss $\varepsilon/2$, the total loss is at most $11\varepsilon/16<\varepsilon$. Objective-preserving basis extension does not alter the value or upper certificate. Lemma~\ref{lem:rational-correction}, the exact PSD tests, and Proposition~\ref{prop:fpt} give the runtime~\eqref{eq:intrinsic-runtime}.
\end{proof}

\section{Robustness to edges that destroy bipartiteness}\label{sec:robust}
The supplied-certificate formulation also identifies graph changes that preserve a useful correction. We keep the matroid unrestricted and control added edge weight locally at each vertex.

\begin{theorem}[Local edge-addition stability]\label{thm:robust}
Suppose $(A_0,D_0,P,\tau_0)$ satisfies \eqref{eq:cert}. Add nonnegative weighted edges with adjacency matrix $E$ and degree matrix $D_H$. If
\[
 D_H\preceq\rho D_0\qquad(\rho\ge0),
\]
then the new instance has the same correction $P$ and a valid parameter
\begin{equation}\label{eq:tauprime}
 \tau'=\frac{\tau_0+\rho}{1+\rho}.
\end{equation}
The correction dimension does not increase.
\end{theorem}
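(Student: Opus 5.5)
The plan is to write the new instance's data in terms of the old data and verify the three conditions of \eqref{eq:cert} directly. Adding the edges $E$ leaves the modular rewards $\ell$ unchanged and raises every singleton value by the added weighted degree. So the new adjacency matrix is $A=A_0+E$ and the new diagonal matrix is $D=D_0+D_H$. The factorization $P=\sum_j\kappa_jc_jc_j^\top$ is reused verbatim. Hence the number of correction terms, and so the correction dimension, is unchanged.

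First I check $0\preceq P\preceq D$. The lower bound is inherited. For the upper bound, $P\preceq D_0$ holds by hypothesis, and $D_0\preceq D_0+D_H=D$ because $D_H$ is a nonnegative diagonal matrix. Next I check that $\tau'\in[0,1]$ and that $\tau'$ is rational. Since $\tau_0\in[0,1]$ and $\rho\ge0$, the value $(\tau_0+\rho)/(1+\rho)$ is a weighted average of $\tau_0$ and $1$, so it lies in $[\tau_0,1]$. Rationality holds provided $\rho$ is taken rational; one may replace $\rho$ by a slightly larger rational if needed.

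The main step is the curvature condition $A+\tau'D+P\succeq0$. The key fact is that the signless Laplacian of the added edges is positive semidefinite:
\[
z^\top(D_H+E)z=\sum_{ij}e_{ij}(z_i+z_j)^2\ge0 .
\]
This gives $E\succeq-D_H$. I then split $A+\tau'D+P$ as
\[
(A_0+P+\tau'D_0)+E+\tau'D_H
\;\succeq\;A_0+P+\tau'D_0-(1-\tau')D_H .
\]
Because $1-\tau'\ge0$ and $D_H\preceq\rho D_0$, the last matrix is bounded below by
\[
A_0+P+\bigl(\tau'-(1-\tau')\rho\bigr)D_0 .
\]
A direct computation gives $\tau'-(1-\tau')\rho=\bigl(\tau_0+\rho-\rho(1-\tau_0)\bigr)/(1+\rho)=\tau_0$. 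So the whole expression dominates $A_0+\tau_0D_0+P\succeq0$, which finishes the verification.

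The only delicate point is the sign bookkeeping. The term $E$ alone is indefinite, so it has to be paired with part of $D_H$. The formula for $\tau'$ is exactly the value that makes the leftover coefficient of $D_0$ come out to $\tau_0$. Loop preprocessing (Proposition~\ref{prop:loops}) commutes with this argument, because it only restricts to principal submatrices.
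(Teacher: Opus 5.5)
Your proof is correct and follows the paper's argument. Both use positive semidefiniteness of the signless Laplacian $E+D_H$, the hypothesis $D_H\preceq\rho D_0$ weighted by $1-\tau'=(1-\tau_0)/(1+\rho)\ge0$, and $P\preceq D_0\preceq D_0+D_H$. The paper packages the same algebra as a sum of three PSD matrices, $A_0+\tau_0D_0+P$, $E+D_H$, and $\frac{1-\tau_0}{1+\rho}(\rho D_0-D_H)$, rather than as your chain of Loewner inequalities; your explicit checks that $\tau'\in[\tau_0,1]$ and is rational are harmless additions the paper leaves implicit.
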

\begin{proof}
For all real $z$,
\[
 z^\top(E+D_H)z=\sum_{ij\text{ added}}w_{ij}(z_i+z_j)^2\ge0.
\]
Writing $A'=A_0+E$ and $D'=D_0+D_H$, expand
\begin{align*}
 A'+\tau'D'+P
 &=A_0+\tau_0D_0+P+(E+D_H)\\
 &\quad +(\tau'-\tau_0)D_0-(1-\tau')D_H.
\end{align*}
The last line equals
$\frac{1-\tau_0}{1+\rho}(\rho D_0-D_H)\succeq0$.
All three terms are positive semidefinite, and $P\preceq D_0\preceq D'$. This proves the certificate.
\end{proof}
The assumption controls added weight at each vertex. A bound on total added weight alone does not imply it. Added edges can join vertices on the same side of an original bipartition.

\subsection{Bipartite spectral cores and their perturbations}
A connected bipartite core gives an explicit rank-one correction. The stability theorem then extends that certificate to graphs that need not remain bipartite.

For a connected bipartite graph with positive weighted degrees, let $s$ take value $+1$ on one side and $-1$ on the other, $W$ its total edge weight, and $D$ its degree matrix. If the second eigenvalue of $I-D^{-1/2}AD^{-1/2}$ is at least $\gamma\in(0,1]$, define
\begin{equation}\label{eq:bipcert}
 \tau_0=1-\gamma,\qquad
 P=\frac{\gamma}{2W}(Ds)(Ds)^\top.
\end{equation}

This is a rank-one certificate. Indeed, the normalized adjacency matrix has eigenvector $D^{1/2}s/\sqrt{2W}$ at eigenvalue $-1$. Bipartite symmetry and the gap promise put all other eigenvalues, apart from $1$, in $[-(1-\gamma),1-\gamma]$. The rank-one term raises the $-1$ eigenvalue by $\gamma$, while the diagonal term shifts all eigenvalues by $1-\gamma$. Hence \eqref{eq:cert} holds. Also $D^{-1/2}PD^{-1/2}$ is $\gamma$ times an orthogonal rank-one projector, proving $P\preceq D$.

Combining Theorem~\ref{thm:robust} with Theorem~\ref{thm:master} gives the following result for arbitrary matroids, including matroid loops by Proposition~\ref{prop:loops}.
\begin{corollary}[Perturbed bipartite spectral core]\label{cor:robust}
Let a connected bipartite core have normalized gap at least $\gamma\in(0,1]$. Add edges with incident weight at most $\rho$ times each original vertex degree. Then a deterministic polynomial-time algorithm attains
\begin{equation}\label{eq:robustfactor}
 \left(\frac{3(1+\rho)-\gamma}{4(1+\rho)-2\gamma}
 -\varepsilon\right)\OPT
\end{equation}
for the coverage objective on the augmented graph under any matroid.
\end{corollary}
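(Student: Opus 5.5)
The plan is to instantiate Theorem~\ref{thm:robust} with the bipartite certificate~\eqref{eq:bipcert}, and then feed the resulting rank-one certificate into Theorem~\ref{thm:master}, or into Proposition~\ref{prop:fpt} for the running time. The corollary then reduces to evaluating $\alpha(\tau')$.

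\textbf{Step 1: the starting certificate.} The text preceding the corollary shows that $(A_0,D_0,P,\tau_0)$ with $\tau_0=1-\gamma$ and $P=\frac{\gamma}{2W}(D_0s)(D_0s)^\top$ satisfies~\eqref{eq:cert}, with $q=1$. Since the objective is pure coverage, the singleton values are exactly the weighted degrees, so $D_0$ is the core's degree matrix.

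Two bookkeeping points need care. First, the certificate must be rational. I would replace $\gamma$ by a rational lower bound on the gap, which is allowed since the hypothesis is ``gap at least $\gamma$''. Then $\tau_0$ is rational, and so is $P$, because $W$ and $D_0s$ are rational. Second, the core must span the vertex set of the augmented graph. Otherwise added edges incident to vertices outside the core are not controlled by $\rho D_0$. I read the hypothesis as saying that the augmented graph lives on the core's vertices.

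\textbf{Step 2: transfer the certificate.} The hypothesis that each vertex's added incident weight is at most $\rho$ times its original degree says, entrywise on diagonals, that $D_H\preceq\rho D_0$. Theorem~\ref{thm:robust} then gives the same rank-one $P$ and the parameter
\[
\tau'=\frac{1-\gamma+\rho}{1+\rho}.
\]
In the augmented instance the singleton values are $D'=D_0+D_H$, again because there are no modular rewards. The proof of Theorem~\ref{thm:robust} already gives $P\preceq D_0\preceq D'$. We also need $\tau'\in[0,1]$, which holds since $0\le 1-\gamma\le1$ and $\rho\ge0$. This $\tau'$ is rational whenever $\rho$ is; if $\rho$ is not rational, I would replace it by a rational upper bound.

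\textbf{Step 3: apply the algorithm and compute the factor.} Apply Theorem~\ref{thm:master} with $q=1$ and certificate $(\tau',P)$. Matroid loops are handled by Proposition~\ref{prop:loops}, which preserves the certificate under principal restriction. The branch count is polynomial for fixed $q=1$, so the algorithm runs in polynomial time; Proposition~\ref{prop:fpt} gives this explicitly. The guarantee is $(\alpha(\tau')-\varepsilon)\OPT$. Computing,
\[
2+\tau'=\frac{3(1+\rho)-\gamma}{1+\rho},
\qquad
2+2\tau'=\frac{4(1+\rho)-2\gamma}{1+\rho},
\]
so $\alpha(\tau')$ equals the expression in~\eqref{eq:robustfactor}.

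\textbf{Main obstacle.} The algebra is routine. The delicate part is Step~1's bookkeeping: ensuring the supplied certificate is exactly rational, and that the $D$ in~\eqref{eq:cert} for the augmented instance is the new singleton-value matrix $D'$, not $D_0$. Both are handled by the monotone replacements (a rational lower bound for $\gamma$, a rational upper bound for $\rho$) and by the inequality $P\preceq D_0\preceq D'$.
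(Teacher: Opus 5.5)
Your proposal is correct and follows the paper's route exactly: transport the rank-one bipartite certificate~\eqref{eq:bipcert} by Theorem~\ref{thm:robust} to $\tau'=(1-\gamma+\rho)/(1+\rho)$, apply Theorem~\ref{thm:master} with $q=1$ (using Proposition~\ref{prop:loops} for loops), and simplify $\alpha(\tau')$ to~\eqref{eq:robustfactor}. Your rationality bookkeeping is more careful than the paper's one-line justification, but replacing $\gamma$ by a rational lower bound $\gamma'<\gamma$ strictly lowers the factor, so $\gamma'$ must be chosen within $O(\varepsilon)$ of $\gamma$ and the loss absorbed by running at accuracy $\varepsilon/2$ (the factor is $\tfrac12$-Lipschitz in $\gamma$); for $\rho$ you can instead use the exact rational ratio $\max_i (D_H)_{ii}/(D_0)_{ii}\le\rho$, which only improves the factor.
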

For $\gamma=1/2$ and $\rho=1/10$, the factor is $14/17-\varepsilon$. With $\varepsilon=1/32$, it is $431/544\approx0.792279$. The augmented graph can contain triangles. The value $\gamma/(1+\rho)$ is a certificate parameter; it is not claimed to equal the augmented graph's normalized Laplacian gap.

\subsection{Disconnected cores}
Independent graph components contribute separate correction directions, even when the matroid couples their selections. This gives a direct interpretation of the correction budget.

For $q$ disjoint connected bipartite components with gaps at least a common $\gamma$, take the sum of their rank-one corrections in \eqref{eq:bipcert}. The certificate is block diagonal, has at most $q$ terms, and uses $\tau=1-\gamma$. The approximation factor is unchanged. The matroid can couple different components; the algorithm optimizes over the full base polytope. Its running time is polynomial when the number of components is fixed, not uniformly polynomial when that number grows.

\section{Consequences, examples, and limits of the parameter}\label{sec:examples-new}
These comparisons distinguish three different claims: a curvature-only specialization, a separation between full and thresholded inertia, and a reduction from range-relative quadratic approximation. None is a hardness theorem or a proof of historical priority.

The diagonal bound alone supplies a zero-correction baseline; this is a specialization rather than a novelty claim.

\begin{corollary}[Curvature-only bound]
For $c=\max_i d_i/b_i$ and no correction terms ($q=0$), $A+cD\succeq0$. Hence a polynomial-time version of Theorem~\ref{thm:joint} gives $(1-c/4-\varepsilon)\OPT$. If $c=0$, the objective is modular and greedy is exact.
\end{corollary}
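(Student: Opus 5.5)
The plan is to prove the two claims separately: first the matrix inequality $A+cD\succeq0$, then the factor by applying Theorem~\ref{thm:joint} with $P=0$. The only earlier result needed for the spectral part is Lemma~\ref{lem:spectral-residual}.

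For the matrix inequality, expand the quadratic form edgewise. For every real $z$,
\[
 z^\top(A+cD)z=\sum_{ij\in E}w_{ij}(z_i+z_j)^2+\sum_i(cb_i-d_i)z_i^2 .
\]
The first sum is nonnegative. Each coefficient $cb_i-d_i$ in the second sum is nonnegative by the definition of $c$, after preprocessing has removed every vertex with $b_i=0$. This is the sign-flipped twin of the identity used in the proof of Lemma~\ref{lem:spectral-residual}.

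Next I apply Theorem~\ref{thm:joint} with $P=0$ and $q=0$. Since $c\le1$ by Lemma~\ref{lem:spectral-residual}, the choice $t=c$ is admissible. The full-space inequality implies the restricted condition~\eqref{eq:restricted-certificate}, and $0\preceq0\preceq D$ holds trivially. I also take $u=c$, which is valid because $cD-A\succeq0$ by the same displayed identity in Lemma~\ref{lem:spectral-residual}. Then
\[
 \frac{ut}{2(u+t)}=\frac{c^2}{4c}=\frac c4 ,
\]
so the factor is $1-c/4-\varepsilon$. With $q=0$ the grid has a single branch, and the time bound~\eqref{eq:time-fpt} becomes polynomial.

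One technical point is that $c$ must be rational for the theorem's rational parameters. This holds because $c$ is a maximum of ratios of rational data. The main concern I expect is bookkeeping around the edge cases.
\begin{itemize}
\item If $c=0$, then there are no edges, since every positive-weight edge has endpoints with $d_i>0$. The objective is then modular, and weighted matroid greedy over bases is exact.
\item Loops and zero-value vertices are handled by Proposition~\ref{prop:loops} together with the deletion step of Section~\ref{sec:spectral}. Neither step changes $d_i$ or $b_i$ on the surviving vertices, so $c$ is unchanged.
\end{itemize}
Finally, $\overline U\ge\OPT$ converts the guarantee into $(1-c/4-\varepsilon)\OPT$.
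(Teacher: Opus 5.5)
Your proof is correct and follows the paper's argument: the edgewise identity $z^\top(A+cD)z=\sum_{ij\in E}w_{ij}(z_i+z_j)^2+\sum_i(cb_i-d_i)z_i^2$ supplies a $q=0$ certificate, and $u=t=c$ in Theorem~\ref{thm:joint} gives the loss $c/4$. Your checks of $c\le1$, rationality of $c$, $cD-A\succeq0$, and the $c=0$ case fill in details the paper leaves implicit. One small inaccuracy: loop deletion (Proposition~\ref{prop:loops}) does change degrees, because edges to loops are moved into $\ell'_i$ while $b'_i=b_i$; hence $d'_i\le d_i$ and $c$ can only decrease, which only strengthens the bound.
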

\begin{proof}
The identity $v^\top(A+cD)v=\sum_{ij}w_{ij}(v_i+v_j)^2+\sum_i(cb_i-d_i)v_i^2\ge0$ supplies the certificate. Take $u=c$ and $t=c$ in~\eqref{eq:joint}; the resulting factor is $1-c/4$. The modular case is immediate.
\end{proof}

Counting every negative eigenvalue can overstate the search dimension needed for a nonzero threshold. Disjoint triangles make the distinction explicit within the matroid direction space.

\begin{proposition}[Full versus thresholded negative inertia]\label{prop:triangle}
For each $m\ge1$, take $m$ disjoint unit-weight triangles and impose the rank-$m$ uniform matroid on their $3m$ vertices. Then
\[
\varrho=1,\qquad q_M(0)=2m,\qquad q_M(1/2)=0,
\qquad\alpha(1,1/2)=5/6.
\]
\end{proposition}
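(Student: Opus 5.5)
We compute everything blockwise: the triangles are disjoint and have identical structure, so each quantity reduces to a single triangle plus a description of how the uniform matroid couples the blocks. For a unit triangle, $A_\triangle=J-I$ and every vertex has $b_i=2$, which gives $D=2I$. The spectrum of $D^{-1/2}AD^{-1/2}=\tfrac12(J-I)$ is $\{1,-\tfrac12,-\tfrac12\}$. The full normalized matrix is block diagonal, so $\varrho=1$. The value $\alpha(1,1/2)=1-\tfrac{1\cdot 1/2}{2\cdot 3/2}=5/6$ is direct arithmetic.

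Next we identify $\mathcal T$ with Lemma~\ref{lem:tangent}. For the rank-$m$ uniform matroid on $3m\ge 3$ elements (with $m<3m$), any basis $B$ has exchange edges $ij$ for every $i\in B$ and every $j\notin B$. This exchange graph is complete bipartite between $B$ and its complement, hence connected. Therefore $\mathcal T=\{v:\one^\top v=0\}$ and $s=3m-1$.

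We then compute $q_M(t)=n_-(Z^\top(A+tD)Z)$, which equals the number of negative eigenvalues of the form $v^\top(A+tD)v$ restricted to $\one^\perp$. Since $D=2I$, orthogonal restriction is legitimate. The matrix $A+tD$ is block diagonal with blocks $J-I+2tI$. Each block has eigenvalue $2+2t$ on $\one_\triangle$ and $2t-1$ (multiplicity two) on the sum-zero vectors of the triangle.

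The $2m$ vectors supported in single triangles with zero sum all lie in $\one^\perp$. So for $t=0$ the restricted form is negative definite on this $2m$-dimensional subspace, giving $q_M(0)\ge 2m$. On its $\one^\perp$-complement, spanned by the block indicators with total sum zero, the form is $2$ times a positive multiple, hence nonnegative. Because this decomposition is orthogonal and invariant under $A$, the count is exactly $2m$.

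For $t=1/2$, every block eigenvalue is $0$ or $3$. The full matrix $A+\tfrac12 D$ is therefore PSD, and so is its restriction, which gives $q_M(1/2)=0$.

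The only delicate point is justifying that the inertia on $\mathcal T$ equals the count from the invariant decomposition. Since $\mathcal T$ is spanned by the within-triangle sum-zero vectors together with sum-zero combinations of block indicators, and both families are eigenvectors of $A+tD$, Sylvester inertia gives the result directly. This is the step I expect to need the most care; everything else is routine.
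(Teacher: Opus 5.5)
Your proof is correct and follows the paper's argument. Both use the blockwise spectrum $2,-1,-1$ of $J-I$ with $D=2I$, the sum-zero tangent space, the $2m$ within-triangle negative directions against a nonnegative block-constant complement, and $A+I$ with block eigenvalues $3,0,0$ at $t=1/2$. Your derivation of $\mathcal T$ from the complete bipartite exchange graph of Lemma~\ref{lem:tangent}, and your invariant-subspace (Sylvester) count, fill in details the paper states without proof; the appeal to $D=2I$ there is unnecessary, since the inertia of the restricted form does not depend on the choice of $Z$.
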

\begin{proof}
Let $J_3$ be the $3\times3$ all-ones matrix. The adjacency of one triangle is $J_3-I_3$ with eigenvalues $2,-1,-1$, while its degree matrix is $2I_3$. The tangent space of the uniform matroid consists of all vectors with total coordinate sum zero. Each triangle contributes two independent negative eigenvectors, all supported on its vertices and with coordinate sum zero; the remaining block-constant subspace has no negative directions. Thus the restricted form has $2m$ negative eigenvalues at threshold zero. 

At $t=1/2$, the corrected matrix is $A+I$ with eigenvalues $3,0,0$ per block, so no negative directions remain. Normalization by $D=2I$ gives $\varrho=1$.
\end{proof}
This proves separation between the \emph{parameters} $q_M(0)$ and $q_M(t)$, not a lower bound against other algorithms or a new approximation hardness threshold.

To compare with range-relative quadratic optimization, we must relate the continuous objective range to $\OPT$. Objective-preserving rounding gives exactly the inequality needed for that conversion.

\begin{proposition}[A comparison with range-relative indefinite programming]\label{prop:range}
On $B(M)$, $0\le F(x)\le\OPT$ and $\max_{x\in B(M)}F(x)=\OPT$. Thus a feasible fractional algorithm achieving
\[
F(x)\ge\max_{B(M)}F-\delta\bigl(\max_{B(M)}F-\min_{B(M)}F\bigr)
\]
can be followed by matroid exchange rounding to obtain a $(1-\delta)$ approximation.
\end{proposition}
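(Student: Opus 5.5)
We prove the three claims in order: the pointwise bounds $0\le F\le\OPT$, the equality of the maximum with $\OPT$, and then the rounding consequence.

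\emph{Lower bound.} Write $F$ as the multilinear extension. For $x\in[0,1]^V$, $F(x)$ is the expected value of $f(R)$, where $R$ contains each $i$ independently with probability $x_i$. To check this, expand $b^\top x-\tfrac12x^\top Ax$ edgewise. The coefficient of edge $ij$ is $w_{ij}(x_i+x_j-x_ix_j)=w_{ij}\Pr[R\cap\{i,j\}\ne\varnothing]$, and each reward contributes $\ell_ix_i$. Since $f\ge0$ pointwise, we get $F\ge0$ on $[0,1]^V$, and in particular on $B(M)$.

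\emph{Upper bound.} The random set $R$ need not be independent, so we cannot bound $f(R)$ by $\OPT$ directly. Instead we use the objective-preserving rounding already used in Theorem~\ref{thm:master} and Appendix~\ref{sec:algorithm}. Deterministic exchange (pipage/swap) rounding maps any $x\in B(M)$ to a basis $B$ with $F(\one_B)\ge F(x)$. This holds because $F$ is convex, in fact linear or convex, along exchange directions $e_i-e_j$: the second derivative along such a direction is $-(e_i-e_j)^\top A(e_i-e_j)=2A_{ij}\ge0$, since $A$ has zero diagonal and nonnegative entries. Moving to an endpoint of the segment therefore never decreases $F$. It follows that $F(x)\le F(\one_B)=f(B)\le\OPT$.

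\emph{Maximum equals $\OPT$.} A basis attains $\OPT$ (every independent set extends to a basis, and $f$ is monotone), and at its indicator vector we have $F(\one_B)=f(B)=\OPT$. Hence $\max_{B(M)}F=\OPT$.

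\emph{Rounding consequence.} Write $M^*=\max_{B(M)}F=\OPT$ and $m^*=\min_{B(M)}F\ge0$. The range is $M^*-m^*\le\OPT$. Suppose the fractional algorithm returns $x$ with $F(x)\ge M^*-\delta(M^*-m^*)$. Then $F(x)\ge\OPT-\delta\OPT=(1-\delta)\OPT$. Exchange rounding produces a basis $S$ with $f(S)=F(\one_S)\ge F(x)\ge(1-\delta)\OPT$.

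The only delicate point is justifying the monotone exchange rounding. Here we rely on the earlier rounding lemma, with the convexity of $F$ along $e_i-e_j$ as the key fact. Everything else is direct substitution.
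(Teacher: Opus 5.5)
Your proof is correct and follows the paper's argument essentially step for step: termwise nonnegativity of $F$ (your edgewise expansion $w_{ij}(x_i+x_j-x_ix_j)$ is exactly that), the bound $F(x)\le\OPT$ via the exchange rounding of Appendix~\ref{sec:algorithm} using the nonnegative curvature $2w_{ij}$ along $e_i-e_j$, attainment at an optimal basis indicator, and the range bound $\max_{B(M)}F-\min_{B(M)}F\le\OPT$. As in the paper, the rounding step tacitly works from a convex combination of bases representing $x$; such a combination exists by Carath\'eodory, which suffices for the inequality $F\le\OPT$, but for the algorithmic conclusion it must be supplied by, or computed from, the fractional algorithm.
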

\begin{proof}
Each modular and edge term of $F$ is nonnegative on $[0,1]^V$. Section~\ref{sec:algorithm} proves that every $x\in B(M)$ rounds to a basis with value at least $F(x)$, so $F(x)\le\OPT$; an optimal basis indicator attains equality. The range of $F$ on $B(M)$ is at most $\OPT$. Apply the assumed fractional inequality and round.
\end{proof}
This explains the close connection to Vavasis's range-relative approximation framework~\cite{Vavasis92}. Our candidate contribution is the supplied-certificate coverage guarantee and its thresholded, feasible-direction extension~\eqref{eq:intrinsic-result}, not the general principle of approximating indefinite quadratic forms.

\section{Evidence, computational dependencies, and scope}\label{sec:verification}
The implementation has two distinct certification levels. We state what its checks establish before discussing the broader algorithmic claim and the remaining priority comparison.

The rational optimization and exchange-rounding engine checks objective inequalities and produces exact rational certificates for supplied corrections. The older automatic-certificate prototype proposes factors with numerical eigendecomposition and accepts candidates only after exact rational rank and PSD checks. It has a finite precision guard and may report failure. No unconditional worst-case runtime is claimed for that numerical prototype.

The automatic algorithm proved in Appendix~\ref{app:rational-correction} is instead exact-algebraic. A separate small-instance reference implementation isolates real algebraic roots, keeps each eigenspace calculation in a single-root field, encloses scalar square roots rationally, and checks the output inequalities exactly. Fourteen regression cases cover repeated and irrational eigenvalues, nonorthogonal tangent coordinates, a negative eigenvalue $2^{-80}$ below a threshold, highly unequal diagonal scales, and the zero-rank and capped-threshold cases. These checks exercise the new construction; the asymptotic theorem uses the polynomial-time algebraic algorithms cited in the proof, not a claim about every internal routine of the reference symbolic library.

The previous verification suites include exact arithmetic checks of the scalar residual, grid coverage, curvature conversion, certificate preservation, oracle-derived tangent spaces, and small exhaustively checked matroid instances. They are regression tests, not a formal proof assistant or independent review. The archived scripts, input parameters, random seeds, and JSON result files specify the reproduction conditions. The paper's theorems do not rely on the outcomes of those finite tests.

\paragraph{Distinction from existing results.}
Sellier~\cite{Sellier23} gives a fixed-parameter approximation scheme for frequency-bounded coverage parameterized by \emph{matroid rank}; this is different from $q_M(t)$. Sviridenko, Vondr\'ak, and Ward~\cite{SVW17} develop curvature-sensitive submodular approximation. Together with the spectral and convex-domain predecessors discussed in the introduction, these results delimit the candidate contribution: the supplied-certificate multiplicative formula, its independence-oracle implementation, and the threshold/accuracy extension. Whether that combination follows from prior theorems still requires an independent priority assessment.

\paragraph{Scope of the theorem.}
The correction dimension $q$ may grow with $n$, and the running time may be exponential in $q$. The supplied-certificate theorems and the input-based theorem use exact rational data. The automatic construction requires positive threshold slack, which is charged to the approximation error. It does not claim an exact minimum-rank rational correction at a zero-slack algebraic threshold. The approximation is not a universal improvement for arbitrary monotone submodular maximization. Mathematical review and historical priority assessment remain separate from the finite verification checks; no referee endorsement or unproved hardness claim is asserted.

\noindent\textbf{Declaration of Generative AI Use.} ChatGPT (OpenAI) was used to assist with mathematical analysis and manuscript preparation. The author takes responsibility for the final content.

\label{rev:main-end}
\appendix
\section{Rational branch optimization and output certificates}\label{sec:algorithm}
This appendix supports the supplied-certificate theorem and its stronger versions in the main text. It uses the branch functions $G_a$ and $\Phi_a$, the positive tolerance $\zeta$, and the base polytope $B(M)$ defined there. The purpose is to obtain both a feasible point and a certified branch upper bound with controlled bit lengths. The algorithmic primitives are established: linear optimization by matroid greedy, conditional-gradient gap certificates~\cite{Jaggi13}, and symmetric-exchange rounding~\cite{CVZ09}. We give the error and encoding details needed for this particular objective.

\subsection{A smooth concave lower approximation}
The branch cap is nonsmooth. A rational smoothing lets the algorithm use gradients while keeping an explicit bound on the difference from the original cap.

For rational $\delta>0$, define
\[
 H_\delta(t)=\begin{cases}|t|,&|t|\ge\delta,\\
 t^2/(2\delta)+\delta/2,&|t|\le\delta,
 \end{cases}
 \qquad m_\delta(u,v)=\frac{u+v-H_\delta(u-v)}2.
\]
The function $m_\delta$ is continuously differentiable, concave, and coordinatewise nondecreasing. Its two partial derivatives sum to one. Also
\[
 0\le\min(u,v)-m_\delta(u,v)\le\delta/4.
\]

Let $\mu=\zeta/S_b$ and set
\[
 \widetilde L(x)=\ell^\top x+
 \sum_{ij\in E}w_{ij}m_\mu(1,x_i+x_j),\qquad
 \psi_a(x)=m_\zeta(\widetilde L(x),G_a(x)+\zeta).
\]

Composition preserves concavity here because $m_\delta$ is concave and nondecreasing in each argument. All values and gradients at rational inputs are rational. Since $\sum w_{ij}\le S_b/2$,
\begin{equation}\label{eq:smoothloss}
 0\le\Phi_a(x)-\psi_a(x)\le\zeta/8+\zeta/4<\zeta/2.
\end{equation}

The stopping and encoding analysis needs derivative and range bounds uniform over the grid. Within this appendix, $\Lambda$ denotes the Lipschitz constant below, not the maximum singleton-weight basis value used in Section~\ref{sec:fpt}.

\begin{lemma}[Uniform bounds on the base polytope]\label{lem:bounds}
For centers in the grid, put
\[
 M_1=(q+2)S_b,\quad
 \Lambda=\bigl[1+(q+3)^2\bigr]\frac{S_b^2}{\zeta}+3S_b,
 \quad R=(q+3)S_b.
\]
On $B(M)$, $\|\nabla\psi_a\|_1\le M_1$, the gradient is Euclidean Lipschitz with constant at most $\Lambda$, and the range of $\psi_a$ has length at most $R$.
\end{lemma}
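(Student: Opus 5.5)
The plan is to express $\psi_a=m_\zeta(U,V)$ with $U=\widetilde L$ and $V=G_a+\zeta$, and to read all three bounds off the structure of $m_\delta$. Write $\theta=\partial_1m_\zeta(U,V)$. The text records that the two partials of $m_\delta$ are nonnegative and sum to one, so $\theta\in[0,1]$ and
\[
\nabla\psi_a=\theta\nabla U+(1-\theta)\nabla V .
\]
Hence every bound on $\nabla\psi_a$ reduces to a bound on $\nabla U$ and on $\nabla V=\nabla G_a$. On $B(M)\subseteq[0,1]^V$ the ingredients come from three earlier facts: $0\le(Ax)_i\le d_i\le b_i$, the estimate $\kappa_jw_j^2\le S_b$ from Lemma~\ref{lem:grid}, and the fact that grid centers lie in $[l_j,u_j]$, which gives $|c_j^\top x-a_j|\le w_j$.

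\textbf{Gradient bound.} Each coordinate of $\nabla\widetilde L$ equals $\ell_i+\sum_jw_{ij}\,\partial_2m_\mu$ and lies between $0$ and $b_i$, so $\|\nabla U\|_1\le S_b$. Next,
\[
\nabla G_a=(b-Ax)+\tau\Bigl(\tfrac12b-Dx\Bigr)-\sum_j\kappa_j(c_j^\top x-a_j)c_j .
\]
The first two terms have $\ell_1$ norm at most $S_b$ and $S_b/2$. For the penalty terms, $\kappa_jc_jc_j^\top\preceq D$ implies $\kappa_j(c_j^\top y)^2\le y^\top Dy$ for every $y$. Taking $y=D^{-1}\operatorname{sign}(c_j)$, or arguing directly by Cauchy--Schwarz, gives $\sqrt{\kappa_j}\|c_j\|_1\le\sqrt{S_b}$. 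Combined with $\sqrt{\kappa_j}\,w_j\le\sqrt{S_b}$, each penalty term has $\ell_1$ norm at most $S_b$. Therefore $\|\nabla G_a\|_1\le(q+3/2)S_b$, and $\|\nabla\psi_a\|_1\le(q+2)S_b=M_1$.

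\textbf{Lipschitz constant.} Because $H_\delta$ is $C^1$ with second derivative in $[0,1/\delta]$ almost everywhere, I would bound the Hessian along each segment of $B(M)$ and integrate. The curvature of $\psi_a$ has three parts.
\begin{itemize}
\item The mixed term: from $m_\zeta$ we get $\tfrac12H_\zeta''(U-V)(\nabla U-\nabla V)(\nabla U-\nabla V)^\top$. Its operator norm is at most $\|\nabla U-\nabla V\|_1^2/(2\zeta)\le(q+3)^2S_b^2/\zeta$, using the $\ell_1$ bounds above.
\item The $U$ term: $\theta\nabla^2\widetilde L$ is a sum of $w_{ij}\,\partial_{22}^2m_\mu\,(e_i+e_j)(e_i+e_j)^\top$. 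Since $|\partial_{22}^2m_\mu|\le1/(2\mu)$ and $\|(e_i+e_j)(e_i+e_j)^\top\|=2$, its norm is at most $\sum w_{ij}/\mu\le S_b^2/(2\zeta)$, using $\mu=\zeta/S_b$.
\item The $V$ term: $(1-\theta)(A+\tau D+P)$ has norm at most $\|A\|+\|D\|+\|P\|\le3B_0\le3S_b$, using $-D\preceq A\preceq D$ from Lemma~\ref{lem:spectral-residual} and $P\preceq D$.
\end{itemize}
Adding these gives a bound within $\Lambda$.

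\textbf{Range.} Since $m_\zeta$ lies between $\min(U,V)-\zeta/4$ and $\min(U,V)$, and is nondecreasing in each argument, it suffices to bound $U$ and $V$.
\begin{itemize}
\item For $U$: $U\in[0,S_b]$.
\item For $V$: $F\in[0,S_b]$ and $0\le\tau T\le S_b/8$. The penalty is at most $\tfrac12\sum_j\kappa_jw_j^2\le qS_b/2$. With the $+\zeta$ shift and $\zeta\le S_b$, this gives $V\in[-qS_b/2,\,(1+1/8)S_b+\zeta]$.
\end{itemize}
So $\psi_a$ varies over an interval of length at most about $(q/2+3)S_b\le(q+3)S_b=R$.

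\textbf{Main obstacle.} The main care lies in the Lipschitz step. There I must justify Hessian-based bounds for the only piecewise-$C^2$ Huber composition, by working on segments with the fundamental theorem of calculus, and keep the constants within the stated $\Lambda$. The coefficient choices (operator norm versus $\ell_1$ norm, and the factor $1/2$ in $m$) need to be tracked so that the final sum does not exceed $[1+(q+3)^2]S_b^2/\zeta+3S_b$.
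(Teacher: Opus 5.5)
Your proposal is correct and follows the paper's proof essentially step for step: the convex-combination form of $\nabla\psi_a$, the bound $\kappa_j\|c_j\|_1^2\le S_b$ combined with $\kappa_jw_j^2\le S_b$, the three-part Hessian split (the mixed rank-one term, the edge smoothing, and $A+\tau D+P$), and the range bound through $U$ and $V$; several of your intermediate constants are slightly sharper than the paper's. One slip: the test vector that gives $\kappa_j\|c_j\|_1^2\le S_b$ is $y=\operatorname{sign}(c_j)$, as in the paper, not $D^{-1}\operatorname{sign}(c_j)$, which yields a different and insufficient inequality; your Cauchy--Schwarz alternative $\|c_j\|_1\le\sqrt{S_b}\,\|D^{-1/2}c_j\|_2$ together with $\kappa_j\|D^{-1/2}c_j\|_2^2\le1$ is valid.
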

\begin{proof}
For $x\in B(M)$, $|c_j^\top x-a_j|\le w_j$. Apply $\kappa_jc_jc_j^\top\preceq D$ to the sign vector of $c_j$ to obtain $\kappa_j\|c_j\|_1^2\le S_b$. Lemma~\ref{lem:grid} gives
$\kappa_jw_j\|c_j\|_1\le S_b$.

The gradient of $F$ has nonnegative coordinates bounded by $b_i$. The gradient of $T$ has absolute coordinates at most $b_i/2$. Thus $\|\nabla G_a\|_1\le(q+3/2)S_b$. Likewise $\|\nabla\widetilde L\|_1\le S_b$. The outer smoothing takes a convex combination of these gradients, proving the stated $M_1$ bound.

The operator norms of $A$, $\tau D$, and $P$ are each at most $S_b$, so $\|\nabla^2G_a\|_{\rm op}\le3S_b$. An edge smoothing term contributes Hessian norm at most $w_{ij}/\mu$; summing gives at most $S_b^2/\zeta$. The outer smoothing adds a negative rank-one Hessian whose norm is at most $\|\nabla\widetilde L-\nabla G_a\|_2^2/(2\zeta)$. The preceding gradient bounds imply the displayed $\Lambda$. These bounds also apply across the smoothing breakpoints because the gradient is continuous and piecewise Lipschitz.

Finally $0\le F,L\le S_b$, $0\le T\le S_b/8$, and the penalty is at most $qS_b/2$. The cap and its smoothing therefore have range length at most $(q+3)S_b$, using $\zeta<3S_b/16$.
\end{proof}

\subsection{Certified branch optimization}
A greedy linear optimization bounds the error of the current iterate. We first derive that bound, then specify updates that reach it in the rational input model.

Maintain $x$ as a convex combination of bases. At a current point choose a basis indicator $v$ maximizing $\nabla\psi_a(x)^\top v$ by greedy. Let
\[
 g=\nabla\psi_a(x)^\top(v-x)\ge0.
\]
Concavity and linear optimization imply
\begin{equation}\label{eq:gap}
 \max_{z\in B(M)}\psi_a(z)\le\psi_a(x)+g.
\end{equation}

Stop the branch when $g\le\zeta$. Define its certified upper bound
\begin{equation}\label{eq:ua}
 u_a=\psi_a(x)+g+\zeta/2.
\end{equation}
Equations~\eqref{eq:smoothloss}--\eqref{eq:gap} show $u_a\ge\max\Phi_a$ and
\begin{equation}\label{eq:recoverbranch}
 \Phi_a(x)\ge u_a-3\zeta/2.
\end{equation}

\paragraph{Termination and bit growth.}
The following explicit implementation avoids an unstated real-arithmetic model. Set $C=n\Lambda$. The squared diameter of $B(M)$ is at most $n$. Choose a fixed dyadic step mesh with spacing at most $\zeta/(2C)$. Concavity permits derivative bisection on a line to find a maximum over that mesh. If $g>\zeta$, smoothness gives
\[
 \psi_a(x+t(v-x))\ge\psi_a(x)+tg-Ct^2/2.
\]
Here $g\le M_1\le C$. A mesh point in $[g/(2C),g/C]$ exists, so the best mesh point gains at least $g^2/(4C)>\zeta^2/(4C)$.

Choose one basis in the maintained mixture as the anchor. After a step, quantize all nonanchor mixture weights downward to a fixed dyadic denominator $Q$ and assign the remainder to an anchor basis. If the mixture has at most $s$ terms, the change of each coordinate is at most $s/Q$. Thus the loss is at most $M_1s/Q$. The implementation uses
\[
 N_{\rm it}=\left\lceil\frac{32RC}{\zeta^2}\right\rceil+1,\quad
 s_{\max}=1+|\mathcal A|(N_{\rm it}+1),\quad
 Q\ge\frac{16M_1Cs_{\max}}{\zeta^2},
\]
with $Q=2^k$ for an integer $k\ge0$. Each nonterminating update still gains more than $\zeta^2/(8C)$. The range bound forces termination before $N_{\rm it}$ updates per branch. Warm starts can be retained between branches; the chosen $s_{\max}$ bounds their accumulated supports.

The mesh and $Q$ have polynomial bit length for fixed $q$. A tentative step multiplies only two fixed dyadic denominators; quantization then returns to $Q$. Rational Huber evaluations, gradients, gaps, and comparisons have polynomial encoding length. The ratio $S_b/B_0\le n$ makes the iteration bound polynomial in $n$, $q$, and $1/\varepsilon$, apart from the explicit branch dependence. This establishes the stated bit and oracle model, rather than only an arithmetic-operation bound.

\subsection{Deterministic exchange rounding}
The optimizer retains its own convex decomposition, so no additional decomposition oracle is needed. We merge its bases while comparing the explicit objective at the two feasible exchange endpoints.

Given a convex combination of bases, combine two terms with positive coefficients $\lambda,\nu$. If their bases $B_1,B_2$ differ, symmetric basis exchange supplies $i\in B_1\setminus B_2$ and $j\in B_2\setminus B_1$ for which both $B_1-i+j$ and $B_2-j+i$ are bases. These can be found by testing at most $r$ candidates with two independence queries each. The two possible replacements change the aggregate point to
\[
 x^-=x-\lambda(e_i-e_j),\qquad
 x^+=x+\nu(e_i-e_j).
\]

The current $x$ is a convex combination of these points. Along this line, the quadratic coefficient of $F$ is $w_{ij}\ge0$ (zero if there is no such edge). Therefore at least one endpoint has value at least $F(x)$. Choose the better endpoint. 

The two bases acquire one more common element. After at most $r$ exchanges they coincide and their coefficients can be combined. Repeating yields a basis $S$ with $f(S)\ge F(x)$ in polynomial time in the explicit support size. The modular scores cause no additional curvature.

\begin{proof}[Proof of Theorem~\ref{thm:master}]
Run the certified branch method and round its terminal point for every grid center. Return the best basis and put $\overline U=\max_a u_a$. Equation~\eqref{eq:global-upper} proves $\overline U\ge\OPT$. Consider the branch attaining $\overline U$. By rounding, Lemma~\ref{lem:conversion}, and \eqref{eq:recoverbranch}, the returned value is at least
\[
 \alpha\overline U-
 \left(\frac{3\alpha}{2}+\frac1{1+\tau}\right)\zeta
 \ge\alpha\overline U-\frac52\zeta
 \ge(\alpha-\varepsilon)\overline U.
\]
The final inequality uses $\zeta=\varepsilon B_0/4$ and $B_0\le\OPT\le\overline U$. Lemma~\ref{lem:grid} and the rational optimization analysis prove the complexity claim.
\end{proof}

\section{Constructing rational corrections with strict margins}\label{app:rational-correction}
This appendix proves the algorithmic claim in Lemma~\ref{lem:rational-correction}. A short rational factor is not enough: the factor must be found in polynomial bit complexity without adding columns. We first compute the required generalized eigenspaces by exact univariate algebra. We keep their bases unnormalized, and approximate a separate scalar square root only at the last step. The resulting factor then fits the strict margins of the certificate.

Throughout the appendix, $A$ is rational symmetric, $D$ is positive rational diagonal, and $Z$ is rational with linearly independent columns. The inequalities $-D\preceq A\preceq D$ hold by Lemma~\ref{lem:spectral-residual}. Put $s=\rank(Z)$, $Q=Z^\top AZ$, and $K=Z^\top DZ$. If $s>0$, then $K\succ0$. Binary encoding length includes dimensions and the numerators and denominators of all rational entries. Polynomial bounds below have absolute exponents, independent of the number of corrected directions.

\subsection{The exact-algebraic operations needed by the construction}
We use polynomial factorization over $\Q$, not integer factorization. The deterministic polynomial-time factorization theorem of Lenstra, Lenstra, and Lov\'asz~\cite{LLL82}, and univariate real-root algorithms~\cite[Chapter~10]{BPR06}, supply the non-elementary algorithmic prerequisites. The following lemma states the precise encoding convention. It does not assume that arithmetic in a splitting field containing all eigenvalues is efficient.

\begin{lemma}[Single-root arithmetic and approximation]\label{lem:single-root}
Let $p\in\mathbb Z[X]$ be an irreducible polynomial of degree $d$, and specify a real root $\lambda$ by a rational isolating interval. Elements of $\Q(\lambda)$ are represented by polynomials of degree less than $d$ with rational coefficients. Field arithmetic, exact zero and sign tests, and linear algebra with rational-function output admit polynomial bit complexity in the explicitly represented data, whenever the output has polynomial degree and coefficient height.

In particular, determinants and linear-system solutions of polynomial-size matrices with polynomial-height entries have such bounds. If $H$ bounds the coefficient bit lengths of both $p$ and a nonzero field element, that element has absolute value between $2^{-\operatorname{poly}(d,H)}$ and $2^{\operatorname{poly}(d,H)}$. If $a,b,c\in\Q(\lambda)$ have polynomial-height representations, $b(\lambda)>0$, and $c(\lambda)>0$, then
\[
 a(\lambda)\sqrt{b(\lambda)/c(\lambda)}
\]
can be enclosed in a rational interval of width at most $2^{-h}$ in time polynomial in the data encoding and $h$. The constants in these statements do not depend on a separately fixed matrix dimension.
\end{lemma}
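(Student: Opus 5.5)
The proof reduces each claim to classical facts about the number field $\Q(\lambda)\cong\Q[X]/(p)$, with encoding sizes controlled by bounds in $d$ and the coefficient heights.

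\emph{Arithmetic.} Elements are stored as rational polynomials of degree less than $d$. Addition is coefficientwise. Multiplication is polynomial multiplication followed by division by $p$. Because $p$ is irreducible over $\Q$ (verifiable by \cite{LLL82}), every nonzero $g$ is invertible. We obtain the inverse from the extended Euclidean algorithm on $(g,p)$, and subresultant bounds keep the cofactor heights polynomial in $d$ and the heights.

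\emph{Zero and sign tests.} A zero test is syntactic, since the representation is unique modulo $p$. A sign test uses the isolating interval: refine it by bisection, using Sturm sequences or the root-isolation routines of \cite[Chapter~10]{BPR06}, until interval evaluation of $g$ certifies its sign. The refinement depth is governed by the magnitude bound below.

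\emph{Linear algebra.} Determinants and solutions of linear systems over $\Q(\lambda)$ are computed with fraction-free (Bareiss) elimination. Every intermediate quantity is then a minor, so by Hadamard's bound its height is polynomial whenever the output size is. Alternatively, view the computation as polynomial arithmetic in $X$ modulo $p$ and reduce only at the end.

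\emph{Magnitude bound.} Take nonzero $g\in\Q[X]$ of degree less than $d$, and clear denominators so that $g$ and $p$ have integer coefficients. The resultant $\operatorname{Res}(p,g)$ is a nonzero integer, so its absolute value is at least $1$. It also equals $\operatorname{lc}(p)^{\deg g}\prod_{p(\lambda')=0}g(\lambda')$. Each conjugate satisfies $|\lambda'|\le 2^{H+1}$ by Cauchy's bound, hence $|g(\lambda')|\le d\,2^{H}2^{d(H+1)}$. Dividing the resultant by the product over the other conjugates gives $|g(\lambda)|\ge 2^{-\operatorname{poly}(d,H)}$. The upper bound on $|g(\lambda)|$ follows directly from the same estimate, after accounting for the cleared denominators.

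\emph{Enclosure of $a\sqrt{b/c}$.} Each of $a,b,c$ evaluates at $\lambda$ to an interval of width $2^{-h'}$ once $\lambda$ is known to precision $2^{-h''}$. We obtain that precision by bisecting the isolating interval with exact sign evaluation of $p$ at rational points. The Lipschitz constant of a polynomial on the interval is at most $2^{\operatorname{poly}}$, so $h''=h'+\operatorname{poly}(d,H)$ suffices.

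Since $b(\lambda),c(\lambda)\ge 2^{-\operatorname{poly}}$, the quotient $b/c$ is enclosed to width $2^{-h'+\operatorname{poly}}$ and stays bounded away from zero. A rational square root by bisection then gives an enclosure for $\sqrt{b/c}$ with the same kind of error, since $\sqrt{\cdot}$ is Lipschitz with constant $2^{\operatorname{poly}}$ away from zero. Multiplying by the enclosure of $a$ and choosing $h'=h+\operatorname{poly}(d,H)$ yields final width at most $2^{-h}$.

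All steps run in time polynomial in the data and $h$, with exponents independent of any matrix dimension.

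\emph{Main obstacle.} The delicate step is the linear-algebra claim. The general statement requires some hypothesis, and the one stated is that the output has polynomial degree and height. Under that hypothesis I would cite coefficient-growth bounds for fraction-free elimination over $\Z[X]/(p)$ and not reprove them. The remaining care lies in tracking $H$ consistently through the resultant estimate.
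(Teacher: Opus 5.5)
Your proposal is correct and follows the paper's proof: field arithmetic by reduction modulo $p$ with Euclidean inversion and subresultant height bounds, determinant/Cramer (fraction-free) bounds for the linear algebra, the resultant $\operatorname{Res}(p,g)$ with Cauchy's root bound for the two-sided magnitude estimate, and refinement of the isolating interval combined with those lower bounds to enclose $a(\lambda)\sqrt{b(\lambda)/c(\lambda)}$. The small differences — zero tests done syntactically using uniqueness of the degree-$<d$ representation, and sign tests done by interval evaluation rather than signed subresultant root counting — are cosmetic and do not change the argument.
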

\begin{proof}
Reduce polynomial products modulo $p$. Inversion of a nonzero residue uses the extended Euclidean algorithm for that residue and $p$; the residue is invertible because $p$ is irreducible. Subresultant or determinant formulas bound the degrees and coefficient lengths of these operations polynomially. For a matrix of order $m$, clear common denominators before taking its determinant. The determinant has degree at most $m$ times the largest entry degree, and its coefficient length is polynomial in $m$, the entry degree, and the entry length. Cramer's rule gives the same conclusion for solutions. Fraction-free elimination computes the requisite ranks and nonzero minors in polynomial time. These statements concern the displayed matrix problems; they do not assert polynomial height for an arbitrary unbounded-depth arithmetic circuit.

For completeness, a resultant gives the required lower bound on nonzero values. Clear denominators in a field element to write it as $f(\lambda)/a$, with $f\in\mathbb Z[X]$, $\deg f<d$, and $a\ne0$. If $f\ne0$, then $\operatorname{Res}(p,f)$ is a nonzero integer. It equals a power of the leading coefficient of $p$ times the product of $f$ over the $d$ conjugate roots. The root bound for $p$ and the coefficient bound for $f$ give an upper bound $2^{\operatorname{poly}(d,H)}$ for every factor except the selected one. Since the absolute value of the resultant is at least one, the selected value, after division by $a$, is at least $2^{-\operatorname{poly}(d,H)}$. The upper bound follows from the same root bound. Applying this argument to derivatives or the squarefree discriminant also gives a separation bound of that form for distinct roots.

Signed subresultant root counting and interval refinement isolate the real roots and compare them with rational thresholds in polynomial bit complexity~\cite[Chapter~10]{BPR06}. For polynomial-height rational functions, the preceding lower bounds control nonzero denominators, and polynomial derivative bounds control evaluation error. Refining the isolating interval by polynomially many additional bits therefore encloses each field value with prescribed accuracy. A positive radicand has a polynomial-bit positive lower bound, so rational bisection of its scalar square root and interval multiplication enclose the displayed expression with $h$ accurate bits. No new algebraic numbers need to be adjoined to the field during this numerical enclosure step.
\end{proof}

The next argument uses only two bounded-depth linear-algebra stages in a single-root field: a kernel computation and an unnormalized orthogonal projection. This is why the polynomial-height condition in Lemma~\ref{lem:single-root} is available rather than assumed.

\subsection{A rank-preserving factor without a common splitting field}
The target is the real correction $P_*$ in Proposition~\ref{prop:inertia}, evaluated at $t=t_0$. We prove the stronger factor-approximation statement formerly needed as a computational hypothesis. This proof is separate from Sobczyk's spectral projector results~\cite[Corollaries~4.2--4.5 in the full version]{Sobczyk25}; those results are not cited as a black-box proof of factor extraction.

\begin{lemma}[Polynomial-time approximation of the required factor]\label{lem:factor-computation}
Suppose $k=q_M(t_0)>0$, with rational $0\le t_0<1$, and let $\delta,\eta\in(0,1)$ be rational with $t_0+\delta<1$. A deterministic algorithm computes a rational $n\times k$ matrix $\widehat R$ such that, for some real $n\times k$ matrix $R$,
\begin{equation}\label{eq:factor-approximation}
 RR^\top=(1-\delta/4)P_*,
 \qquad \|D^{-1/2}(\widehat R-R)\|_F\le\eta.
\end{equation}
Its bit complexity and output encoding length are polynomial in the rational input encoding, $\log(1/\delta)$, and $\log(1/\eta)$, with absolute polynomial exponents.
\end{lemma}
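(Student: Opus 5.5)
The plan is to construct $P_*$ at $t=t_0$ eigenvalue by eigenvalue, working with the pencil $(Q,K)$ directly so that $K^{-1/2}$ is never formed. With $U=ZK^{-1/2}$, the eigenvalues $\mu$ of $H_M$ are exactly the roots of $\chi(X)=\det(Q-XK)\in\Q[X]$. Moreover, if $v_1,\ldots,v_s$ are $K$-orthonormal generalized eigenvectors ($Qv_i=\mu_iKv_i$, $v_i^\top Kv_j=\delta_{ij}$), then
\[
P_*=\sum_{i:\mu_i<-t_0}(-\mu_i-t_0)\,DZv_i\,(DZv_i)^\top .
\]
This holds because $K^{1/2}v_i$ are orthonormal eigenvectors of $H_M$, and $U K^{1/2}v_i=Zv_i$. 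So I will set
\[
R=\bigl[\sqrt{(1-\delta/4)(-\mu_i-t_0)}\;DZv_i\bigr]_{i:\mu_i<-t_0},
\]
which has exactly $k=q_M(t_0)$ columns and satisfies $RR^\top=(1-\delta/4)P_*$. The task is to compute a rational approximation to some such $R$, with the $K$-orthonormal basis chosen so that every column has the form $a(\lambda)\sqrt{b(\lambda)/c(\lambda)}$ allowed by Lemma~\ref{lem:single-root}.

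\textbf{Step 1: isolate the eigenvalues and compute unnormalized eigenspaces.} Compute $\chi$ exactly, with polynomial height by fraction-free determinant evaluation, and factor it over $\Q$ using~\cite{LLL82}. For each irreducible factor $p$, isolate its real roots~\cite[Chapter~10]{BPR06} and keep exactly those $\lambda$ with $\lambda<-t_0$; the comparison with the rational $t_0$ is exact. Distinct irreducible factors share no roots, so each eigenvalue lies in a single field $\Q(\lambda)$. In that field:
\begin{itemize}
\item compute the rank of $Q-\lambda K$, giving the multiplicity $m_\lambda=s-\rank(Q-\lambda K)$;
\item compute a kernel basis $y_1,\ldots,y_{m_\lambda}$ by Cramer-type formulas, so that entries have polynomial height;
\item perform Gram--Schmidt in the $K$-inner product without normalizing, writing each $w_j$ via the classical Gram-determinant formula (a determinant of a bordered Gram matrix of the $y$'s) rather than by iterated subtraction.
\end{itemize}
Vectors from distinct eigenvalues are automatically $K$-orthogonal, and the multiplicities sum to $k$. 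Column $j$ for $\lambda$ is then
\[
DZw_j\sqrt{(1-\delta/4)(-\lambda-t_0)/(w_j^\top Kw_j)},
\]
where $w_j^\top Kw_j>0$ is a ratio of consecutive Gram determinants. This uses only the two bounded-depth stages (a kernel and an unnormalized projection), so the polynomial-height hypothesis of Lemma~\ref{lem:single-root} is genuinely met.

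\textbf{Step 2: enclose the entries and assemble $\widehat R$.} Each entry of $R$ has the form $a(\lambda)\sqrt{b(\lambda)/c(\lambda)}$ with $b,c>0$ verified by exact sign tests. By Lemma~\ref{lem:single-root}, I enclose each entry in a rational interval of width $2^{-h}$ and take a rational point of it. Then
\[
\|D^{-1/2}(\widehat R-R)\|_F\le 2^{-h}\sqrt{nk}\,\max_i b_i^{-1/2}.
\]
Since $\min_i b_i$ has polynomial bit length, $h=\log(1/\eta)+\operatorname{poly}(N)$ suffices. The total cost is polynomial: at most $n$ irreducible factors, degrees at most $s$, and all operations covered by Lemma~\ref{lem:single-root}.

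The main obstacle I expect is the height bookkeeping in Step 1. Naive iterated Gram--Schmidt over $\Q(\lambda)$ compounds degree and height. I will handle this with the determinant (fraction-free) formulas, so that each $w_j$ and each $w_j^\top Kw_j$ is a single polynomial-size determinant in entries of polynomial height. Lemma~\ref{lem:single-root} then bounds them directly, including the $2^{-\operatorname{poly}}$ lower bound needed for the positive radicand.
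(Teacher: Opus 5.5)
Your proposal is correct and follows the paper's proof essentially step by step. Like the paper, you factor $\det(Q-XK)$ over $\Q$, keep the isolated roots below $-t_0$, compute a kernel basis and a non-recursive unnormalized $K$-orthogonalization inside the single field $\Q(\lambda)$, attach one scalar square root per column, and enclose the entries rationally to obtain the $D^{-1/2}$-weighted Frobenius bound.

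The only difference is cosmetic. Your bordered Gram-determinant vectors equal the paper's projection residuals $v_j$ times the positive scalar $\det G_{j-1}$, where $G_{j-1}$ is the $K$-Gram matrix of $y_1,\ldots,y_{j-1}$. So if you do not divide by that scalar, $w_j^\top Kw_j=\det G_{j-1}\det G_j$ is a product rather than a ratio. This changes nothing, since the column formula only needs a positive polynomial-height radicand.
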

\begin{proof}
We give the algebraic construction first and its bit bounds afterward. Compute and factor the rational characteristic polynomial
\[
 \chi(X)=\det(Q-XK).
\]
All its roots are real because $K\succ0$ and $Q$ is symmetric. The matrix $K^{-1/2}QK^{-1/2}$ is diagonalizable, so the multiplicity of a root equals the dimension of its generalized eigenspace. Factorization over $\Q$ is deterministic polynomial-time~\cite{LLL82}. Isolate the real roots of each irreducible factor and compare them exactly with $-t_0$. A root equal to $-t_0$ is not selected. The selected multiplicities sum to $k$ by inertia.

Fix one selected root $\lambda$, with minimal polynomial $p$. Work only in $\Q(\lambda)$. Find a basis $w_1,\ldots,w_m$ of
\[
 \ker(Q-\lambda K).
\]
For $j>1$, let $W_{j-1}$ have columns $w_1,\ldots,w_{j-1}$ and define
\begin{equation}\label{eq:unnormalized-projection}
 v_1=w_1,\qquad
 v_j=w_j-W_{j-1}(W_{j-1}^{\top}KW_{j-1})^{-1}
                    W_{j-1}^{\top}Kw_j.
\end{equation}
The prefix Gram matrix is positive definite. Hence each $v_j$ is nonzero, belongs to the same eigenspace, and is $K$-orthogonal to all earlier vectors. Every coordinate of $v_j$ remains in $\Q(\lambda)$. Formula~\eqref{eq:unnormalized-projection} uses the original kernel basis; it does not recursively adjoin square roots of norms.

Write $\nu_j=v_j^\top Kv_j>0$. For this eigenspace, put in $R$ the $m$ columns
\begin{equation}\label{eq:algebraic-factor-column}
 r_{\lambda,j}
 =DZv_j\sqrt{\frac{(1-\delta/4)(-\lambda-t_0)}{\nu_j}}.
\end{equation}
Eigenspaces for distinct roots are $K$-orthogonal: symmetry gives
$(\lambda-\lambda')v^\top Kv'=0$. Thus the vectors $v_j/\sqrt{\nu_j}$, over all roots, form a $K$-orthonormal generalized eigenbasis. Substitution into the spectral formula for $P_*$ in Proposition~\ref{prop:inertia} shows that the $k$ selected columns in~\eqref{eq:algebraic-factor-column} satisfy the first equality in~\eqref{eq:factor-approximation}. This identity determines their column count without any numerical rank test.

We next bound the exact intermediate representations. Clearing rational denominators in $\chi$ gives an integer polynomial of degree $s$ and polynomial coefficient bit length, by the determinant bound in Lemma~\ref{lem:single-root}. Its irreducible factors have polynomial encoding length. For a factor of degree $d\le s$, a kernel basis is obtained from a minor of $Q-XK$ that is invertible modulo $p$, together with its associated column equations. Its coordinates are ratios of determinants of matrices with degree-one polynomial entries. They have polynomial degrees and heights. Reduction modulo $p$ and field inversion preserve polynomial encoding by Lemma~\ref{lem:single-root}.

The prefix Gram matrices in~\eqref{eq:unnormalized-projection} have polynomial-height entries. Their inverses are again ratios of determinants of order at most $s$. Consequently, all $v_j$, $\nu_j$, and $DZv_j$ have polynomial-height single-root representations. There is a fixed number of determinant/field-reduction stages; no tower of fields or common splitting field is constructed. Distinct irreducible factors are processed separately. If $\chi$ has factors $p_\ell^{m_\ell}$, then $\sum_\ell m_\ell\deg p_\ell=s$. The total number of these polynomial-size computations is therefore polynomial in the input dimension.

Each coordinate in~\eqref{eq:algebraic-factor-column} is a polynomial-height field element times one scalar square root of a positive field element. Lemma~\ref{lem:single-root} encloses it with any requested absolute accuracy in polynomial time. In particular, let $b_{\min}=\min_iD_{ii}>0$ and choose
\[
 h=\frac{\eta\min\{1,b_{\min}\}}{2nk}.
\]
Compute a rational interval of width at most $2h$ for each of the $nk$ coordinates, and take its midpoint. The resulting matrix obeys
\[
 \|D^{-1/2}(\widehat R-R)\|_F^2
 \le\frac{nk h^2}{b_{\min}}
 \le\frac{\eta^2}{4nk}\le\eta^2.
\]
The logarithm of $1/h$ is polynomial in the input encoding and $\log(1/\eta)$. The magnitudes of all entries are bounded because $RR^\top\preceq D$. Both their integer parts and their approximating denominators therefore have polynomial length. This proves the error, output-size, and bit-complexity claims. Repeated eigenvalues are handled by their complete unnormalized kernel basis; arbitrarily close distinct eigenvalues require only the additional polynomially bounded isolation precision from Lemma~\ref{lem:single-root}.
\end{proof}

\subsection{Strict margins and exact certificate acceptance}
We can now replace the existence argument by an algorithm. Positive slack is essential: the theorem does not promise an exactly minimum-rank rational correction at a zero-slack algebraic threshold.

\begin{proof}[Proof of Lemma~\ref{lem:rational-correction}]
Compute $k$ by exact rational inertia of $Z^\top(A+t_0D)Z$. If $k=0$, take $\widehat P=0$ and $\widehat t=t_0$. If $t_0+\delta\ge1$, take $\widehat P=0$ and $\widehat t=1$; $A+D\succeq0$ by Lemma~\ref{lem:spectral-residual}. The empty tangent-space case is included in $k=0$.

Assume $k>0$ and $t_0+\delta<1$. Let $P_*\preceq D$ be the real correction from Proposition~\ref{prop:inertia}, and put $P'=(1-\delta/4)P_*$. For every $v\in\mathcal T$,
\begin{equation}\label{eq:margin}
 v^\top(A+(t_0+\delta)D+P')v\ge(3\delta/4)v^\top Dv,
 \qquad 0\preceq P'\preceq(1-\delta/4)D.
\end{equation}
The first inequality adds $\delta D$ to the original feasible form and subtracts only $(\delta/4)P_*\preceq(\delta/4)D$.

Apply Lemma~\ref{lem:factor-computation} with $\eta=\delta/64$. Set $\widehat P=\widehat R\widehat R^\top$ and $\widehat t=t_0+\delta$. This is rational and positive semidefinite with rank at most $k$. Since $RR^\top=P'$ and $\|D^{-1/2}R\|_{\rm op}\le1$, expansion of the product difference gives
\[
 \|D^{-1/2}(\widehat P-P')D^{-1/2}\|_{\rm op}
 \le2\eta+\eta^2<\delta/16.
\]
Combining with~\eqref{eq:margin} yields
\[
 0\preceq\widehat P\preceq(1-3\delta/16)D,
 \qquad
 v^\top(A+\widehat tD+\widehat P)v
 \ge(11\delta/16)v^\top Dv \quad(v\in\mathcal T).
\]
Thus both certificate inequalities hold exactly. The columns of $\widehat R$, each with weight one, are the required rational rank-one terms; zero columns may be omitted. Their polynomial encoding and construction time follow from Lemma~\ref{lem:factor-computation}. Rational matrix multiplication and positive-semidefinite tests have polynomial bit complexity, so the algorithm may also verify its output before returning it.
\end{proof}

The proof uses exact rational input and positive accuracy slack, together with standard polynomial-time univariate algebra. It is not a claim that a numerical eigensolver with a fixed tolerance finds the correct rank. The separate numerical-proposal prototype remains subject to its documented precision guard.

\section{The limit of the baseline scalar conversion}\label{app:scalar-limit}
This appendix identifies an equality case in Lemma~\ref{lem:conversion} when the branch padding is zero. The conclusion concerns that scalar argument alone, not the matrix-certificate class or approximation hardness.

\begin{proposition}[The scalar conversion alone cannot improve the factor]\label{prop:sharp}
For every $0<\tau\le1$, the zero-padding inequality
\[
 F\ge\alpha(\tau)\min\{L,F+\tau T\}
\]
for the single-edge scalar functions is attained at $x_i=x_j=\tau/(1+\tau)$.
\end{proposition}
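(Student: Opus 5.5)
The plan is a direct evaluation on one unit-weight edge $ij$ with $\ell=0$. Then $b_i=b_j=1$, and the single-edge scalar functions reduce to
\[
F=a+b-ab,\qquad L=\min(1,a+b),\qquad T=\tfrac12\{a(1-a)+b(1-b)\},
\]
where $a=x_i$ and $b=x_j$. The inequality $F\ge\alpha(\tau)\min\{L,F+\tau T\}$ itself needs no new argument. It is Lemma~\ref{lem:conversion} with $\zeta=0$ and with the projection penalty absent or dropped. So the only content to establish is that equality occurs at the stated point. By homogeneity in the edge weight, a unit weight loses no generality.

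Put $p=\tau/(1+\tau)$ and $a=b=p$. Since $0<\tau\le1$, we have $0<p\le1/2$, so $2p\le1$ and $L=2p$. The three quantities are then
\[
F=2p-p^2,\qquad T=p(1-p).
\]
Hence
\[
F+\tau T-L=p\bigl(\tau-(1+\tau)p\bigr)=0,
\]
so the two arguments of the minimum coincide and $\min\{L,F+\tau T\}=2p$.

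It remains to compare $\alpha(\tau)\cdot 2p$ with $F$. On one side,
\[
\alpha(\tau)\cdot 2p=\frac{2+\tau}{2+2\tau}\cdot\frac{2\tau}{1+\tau}=\frac{\tau(2+\tau)}{(1+\tau)^2}.
\]
On the other side,
\[
F=p(2-p)=\frac{\tau}{1+\tau}\cdot\frac{2+\tau}{1+\tau},
\]
which is the same value. So equality holds at this point.

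To interpret this, I would trace the chain in the proof of Lemma~\ref{lem:conversion}. The conversion uses Lemma~\ref{lem:scalar} and $L\ge t$; both must be tight for equality. Lemma~\ref{lem:scalar} is tight here because its residual $\tfrac12((a-b)^2+y)$ vanishes when $a=b$ and $a+b\le1$. The bound $L\ge t$ is tight because the cap is active with $L=F+\tau T$. There is no real obstacle in this verification. The only points needing care are that $\tau\le1$ keeps $2p\le1$, so $L$ is on its linear piece, and that the constant $\alpha(\tau)$ cannot be increased for this scalar argument alone, since any larger constant would fail at this point.
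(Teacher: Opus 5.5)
Your proposal is correct and matches the paper's proof: both substitute $x_i=x_j=p=\tau/(1+\tau)\le 1/2$, check that $F+\tau T=L=2p$, and verify that $F=\alpha(\tau)L$ (the paper phrases this as $F/L=1-p/2$). Your added remarks are accurate but not needed for the statement: the inequality is Lemma~\ref{lem:conversion} with zero padding, and each step of that lemma's chain is tight at this point.
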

\begin{proof}
Put $p=\tau/(1+\tau)\le1/2$. Then $F=2p-p^2$, $L=2p$, and $T=p(1-p)$. Direct substitution gives $F+\tau T=L$ and
\[
 \frac FL=1-\frac p2=\frac{2+\tau}{2+2\tau}.
\]
\end{proof}
This is a limitation of this scalar proof step. It is not an integrality-gap construction for the matrix-certificate class and not a hardness theorem. A stronger algorithm could use additional structure. We have not improved $5/6$ on the original $\gamma=1/2$ bipartite class.

\begingroup\small
\setlength{\parskip}{0pt}
\bibliographystyle{unsrt}
\bibliography{references}
\endgroup
\end{document}